\documentclass[11pt]{article}
\usepackage{setspace}
\usepackage{amsthm,amsfonts,epsfig, graphics,amsbsy,subfig,amsmath}
\usepackage{xcolor}
\usepackage[colorlinks=true, 
            linkcolor=blue, 
            citecolor=red, 
            urlcolor=blue, 
            filecolor=magenta]{hyperref}
\usepackage{graphicx}
\usepackage[margin=1in]{geometry}
\usepackage{color,float}		
\usepackage{epsfig}

\newcommand{\intRR}{\int\limits_{\mathbb{R}^2} }

\usepackage{makecell}
\usepackage{amssymb}
\usepackage{amsmath}
\usepackage{kotex}
\usepackage{graphicx}
\newtheorem{thr}{Theorem}
\newtheorem{defn}{Definition}

\newtheorem{definition}{Definition}[section]

\newtheorem{remark}[definition]{Remark}

\newcommand{\Lc}{\mathcal{L}}
\newcommand{\Mc}{\mathcal{M}}
\newcommand{\Tc}{\mathcal{T}}
\newcommand{\Rc}{\mathcal{R}}
\newcommand{\vtheta}{\pmb{\theta}}
\newcommand{\vphi}{\pmb{\phi}}
\newcommand{\vf}{\textbf{\textit{f}}}
\newcommand{\vx}{\textbf{\textit{x}}}
\newcommand{\I}{\mathrm{i}}
\newcommand{\wh}{\widehat}

\newcommand{\Nc}{\mathcal{N}}

\newcommand{\Rb}{\mathbb{R}}
\newcommand{\Db}{\mathbb{D}}
\newcommand{\Zb}{\mathbb{Z}}
\graphicspath{{./figures/}}
\title{Gaussian beam Radon transform for tensor fields in $\Rb^2$}
\author{Rohit Kumar Mishra\thanks{Department of Mathematics, Indian Institute of Technology, Gandhinagar, Gujarat, India. \url{rohit.m@iitgn.ac.in}}
\and Souvik Roy\thanks{Department of Mathematics, The University of Texas at Arlington, Arlington, TX, United States of America. \url{souvik.roy@uta.edu}}\and Indrani Zamindar\thanks{Johann Radon Institute for Computational and Applied Mathematics (RICAM), Altenbergerstrasse 69, 4040 Linz, Austria. \url{indrani.zamindar@ricam.oeaw.ac.at}}}
\date{}
\begin{document}
\maketitle
\vspace{-7mm}
\begin{abstract} 
In this article, we introduce and study a set of generalized Gaussian beam Radon transforms (GbRt) acting on tensor fields in $\Rb^2$. The operators considered include longitudinal, transverse, mixed GbRts, along with their integral moments. These operators extend the corresponding notions of the classical generalized Radon transforms for tensor fields. We establish reconstruction results for vector and symmetric 2-tensor fields using appropriate combinations of the defined transforms. This work extends a recent study on the recovery of scalar functions from their GbRt to the recovery of vector and tensor fields from analogously defined generalized GbRts.
\end{abstract}
\vspace{2mm} 
\textbf{Keywords:} Gaussian beam, Inverse problems, Tensor tomography, Generalized Radon transform
\vspace{2mm} \\
\textbf{Mathematics Subject Classification:} 44A12, 44A35,  44A30, 42B05.
\section{Introduction}
The Radon transform, introduced by Johann Radon \cite{Radon1917}, maps a function to its integrals over hyperplanes. In $\mathbb{R}^2$, hyperplanes reduce to lines, and thus the Radon transform coincides with the ray transform, which integrates a function along straight lines. The inversion of the Radon transform can be expressed through filtered back-projection formulas, and its mapping properties are well understood \cite{Helgason1999, Natterer2001}. A natural extension of this framework extends to ray transforms acting on vector and tensor fields. In this setting, one integrates suitable contractions of the field along lines, leading to the theory of vector and tensor tomography. These transforms include longitudinal, transverse, mixed ray transforms, and their moment transforms \cite{Sharafutdinov1994}. A fundamental difference from the scalar case is the presence of nontrivial kernels. For example, the longitudinal ray transform annihilates potential fields and recovers only the solenoidal component. As a result, reconstruction of a general vector or tensor field requires additional data, such as transverse, mixed, or moment ray transforms \cite{denisjuk2006inversion,derevtsov3, krishnan2019momentum,krishnan2020momentum,krishnan2019solenoidal, Louis2024,mishra2020full, Rohit_Suman_2021, Rohit_Suman_2022, PaternainSaloUhlmann2013, Kamran_Otmar_Xray_2tensor, Sharafutdinov1994}. Although the literature contains extensive results on ray transforms of tensor fields, comparatively fewer results are available for Radon transforms. Systematic extensions of the Radon transform from scalar functions to symmetric $m$-tensor fields have been developed in recent works. In particular, longitudinal and transverse Radon transforms for symmetric $m$-tensor fields, together with their weighted transforms, have been introduced, and appropriate combinations of these transforms enable full recovery of the field. The case of vector fields has been studied in \cite{Kunyansky2023, SvetovPolyakova2023}, with further developments for symmetric $m$-tensor fields in $\mathbb{R}^3$ \cite{SvetovPolyakova2024} and, more recently, in $\mathbb{R}^n$ \cite{Abhishek2025, mishra2026radon}.

Computerized tomography (CT) represents a well-established application of the Radon transform in medical imaging. In CT, the measured data correspond to the attenuation of X-ray intensity, which can be accurately modelled as the Radon transform of the underlying attenuation coefficient. The primary objective is to reconstruct this coefficient from its line integrals, typically using the filtered back-projection (FBP) algorithm. This approach relies on approximating X-rays as narrow, straight-line beams, for which the Radon transform provides an effective forward model, and FBP offers an efficient reconstruction technique. In contrast, these assumptions are not directly applicable to optical imaging modalities. Optical beams, such as those produced by lasers, often exhibit non-uniform intensity profiles, and in many laser optics applications, the laser beam is assumed to be a Gaussian beam with a Gaussian intensity profile. In modalities such as optical projection tomography (OPT) and optical coherence tomography (OCT), neglecting the beam's Gaussian nature during reconstruction can lead to image degradation \cite{koskela2019gaussian, ralston2006inverse, ralston2005gaussian, trull2017point, trull2018comparison}. Previous studies \cite{trull2017point, trull2018comparison} have demonstrated that, in OPT, reconstruction methods that explicitly account for the Gaussian beam profile yield improved image quality compared to standard FBP. Specifically, FBP-based reconstructions exhibit tangential blurring and reduced contrast, whereas Gaussian beam-based approaches restore contrast and mitigate such blurring, producing reconstructions that more closely resemble the true image. These findings indicate that, for optical tomographic techniques, incorporating the beam-propagation characteristics, particularly the Gaussian profile, is essential for achieving accurate image reconstruction.

Recently, the authors in \cite{Roy2023} studied the Gaussian beam Radon transform for scalar functions in $\mathbb{R}^2$, where they developed inversion formulas and corresponding numerical reconstruction methods, demonstrating improved performance over the classical filtered back-projection algorithm. However, in many applications, particularly in optical imaging, the underlying quantities of interest are vector or tensor fields arising from electromagnetic wave propagation, where both the Gaussian beam profile and the vectorial nature of the field play a significant role \cite{Levy2019}. Therefore, extending the Gaussian Radon transform to vector and tensor fields is essential for accurately capturing these physical effects and for achieving reliable reconstructions in such settings. In this work, we generalize the Gaussian beam Radon transform to vector and symmetric $2$-tensor fields by introducing longitudinal, transverse, and mixed Gaussian beam transforms, along with their moments. We establish recovery results from appropriate combinations of these transforms. To achieve the goals, we have used the Fourier transform, the Fourier series, and the Mellin transform.

The paper is organized as follows: In Section \ref{Definitions}, we introduce the necessary notations and define the generalized Gaussian beam Radon transforms of our interest. Section \ref{Results} represents the main results along with proofs in subsequent subsections. We finish the paper with acknowledgments in Section \ref{Acknowledgements}.
\vspace{-5mm}
\section{Definitions and notations}\label{Definitions}
\vspace{-4mm}
Throughout the paper, we use bold font letters to denote vectors and regular font letters to denote scalars. Let $\Db \subseteq \Rb^2$ be the unit disk. For $m\ge1$, let $C_c^{\infty}(S^m;\Db)$ denote the space of infinitely differentiable, compactly supported symmetric $m$-tensor fields on $\Db$. This means for $\vf \in C_c^{\infty}(S^m;\Db)$, the components $\displaystyle f_{i_1 \dots i_m}$ (for $1 \leq i_1, \dots , i_m \leq 2)$ of $\vf$ are compactly supported smooth functions defined on $\Db$. We use  $\theta, \phi$ to denote angles, and  $\vtheta,\vphi$ are the corresponding unit vectors, that is, for given $\theta$ and $\phi$ we have $\vtheta=(\cos\theta, \sin\theta) =(\theta_1, \theta_2)$ and $\vphi=(\cos\phi, \sin\phi) = (\phi_1, \phi_2)$. In CT imaging, X-rays are treated as thin, straight beams, which makes the Radon transform a practical and computationally efficient model. Then, using the filtered back-projection (FBP) algorithm on this model, one can effectively reconstruct images. However, optical systems, particularly those based on laser imaging, operate under different physical principles. Laser beams do not have uniform intensity across their profile; instead, they typically show varying intensity distributions. Gaussian beams are particularly common in laser optics because they accurately represent the propagation of coherent light waves through space. 
A Gaussian beam is the fundamental solution of the paraxial wave equation \cite{siegman1986lasers} and describes how a laser beam propagates in free space. Its transverse intensity profile at any propagation distance is Gaussian, and its width changes smoothly along the propagation axis. For a fixed $w_0 \in \mathbb{R}$, the Gaussian beam radius (or beam waist function) $\displaystyle w: \mathbb{R} \rightarrow [w_0, \infty)$ is given by
$$w(u)= w_0\sqrt{1+\left(\frac{u}{z_R}\right)^2} \quad \mbox{and }\quad  z_R= \frac{\pi w_0^2}{\lambda}. $$
Here, $\lambda$ is the wavelength of the beam, $w_0$ is called the minimum beam waist or the smallest transverse radius of the beam, and $z_R$ is the Rayleigh length. Note that the function $w$ is an even function, that is, $w (-u)=w(u)$.

\noindent In this work, we consider the following Gaussian beam kernel or the point spread function (PSF): 
\begin{align}\label{eq:Gaussian Kernel}
    K(x_1,x_2)= \frac{w_0}{w(x_2)}\exp \left(-\frac{x_1^2}{w^2(x_2)}\right).
\end{align}
The classical Radon transform in $\mathbb{R}^2$ is defined as the integral of a function over straight lines. In many laser‑optics applications, monochromatic laser beams do not exhibit X-ray-like uniform intensity; instead, their transverse profiles are typically Gaussian. As a result, each line integral involves convolving the function with a Gaussian beam kernel along the line of integration. With this in mind, we recall the Gaussian‑beam Radon transform of a function $h$, please see \cite{Roy2023} for a more detailed discussion. In the following discussion, $K$ will be the fixed Gaussian kernel defined above in equation \eqref{eq:Gaussian Kernel}.
\begin{defn}
 Let $h\in C_c^{\infty}(\Db)$, $\theta \in [0,2\pi)$, $s\in \Rb$. The \textbf{Gaussian beam Radon transform} of 
    $h$ is defined as 
\begin{align}\label{def:Gauss_beam_scalar}
        \Rc h(s,\theta)= \int_{-\infty}^{\infty} \int_{-\infty}^{\infty} K(s-z, t)\,  h\, (z\vtheta+t\vtheta^\perp)
         \, dz\,dt.
    \end{align}
\end{defn}
\noindent Note that, if we take the kernel $K$ to the Dirac delta supported on straight lines, the definition \eqref{def:Gauss_beam_scalar} reduces to the classical Radon transform. Now, we define a set of generalized Gaussian beam Radon transforms for symmetric $m$-tensor fields in $\Rb^2$, which are our primary objects of study.
\begin{defn}
Let $\vf\in C_c^{\infty}(S^m;\Db)$, $\theta \in [0,2\pi)$, $s\in \Rb$, $k\ge 0$ be an integer and $0\le \ell\le m$. The \textbf{$k^{th}$ moment generalized Gaussian beam Radon transform} of $\vf$ is defined as 
\begin{align}\label{generalized_Gaussian_tensor}
\Mc^k_{\ell}\vf\,(s,\theta)= \int_{-\infty}^{\infty} \int_{-\infty}^{\infty}t^k K(s-z, t)\, \left\langle \vtheta^{\ell}(\vtheta^\perp)^{m-\ell}, \, \vf\, (z\vtheta+t\vtheta^\perp)\right\rangle\, dz\,dt,
\end{align}
\end{defn}
\noindent where the inner product is defined as $\left\langle\vtheta^\ell(\vtheta^{\perp})^{(m-\ell)},\, \vf \right\rangle= \sum_{i_1, \dots, i_m = 1}^2\theta_{i_1}\dots \theta_{i_\ell}\theta^{\perp}_{i_{\ell+1}}\dots \theta^{\perp}_{i_m} f_{i_1\dots i_m}$. \\
We substitute $\vx= z\vtheta+t\vtheta^\perp$, then we have $z=\vtheta\cdot\vx$ and $t=\vtheta^\perp\cdot\vx$. Performing the change of variables, the above definition can be written in the following way:
\begin{defn}
    Let $\vf\in C_c^{\infty}(S^m;\Db)$, $\theta \in [0,2\pi)$, $s\in \Rb$, $k\ge 0$ be an integer and $0\le \ell\le m$. The \textbf{$k^{th}$ moment generalized Gaussian beam Radon transform} of 
    $\vf$ is defined as 
    \begin{align}\label{def:generalized_Gaussian_tensor}
        \Mc^k_{\ell}\vf\,(s,\theta)= \int_{\Rb^2} (\vtheta^\perp \cdot \vx)^k K(s-\vtheta\cdot \vx, \, \vtheta^\perp \cdot \vx)\, \left\langle \vtheta^{\ell}(\vtheta^\perp)^{m-\ell}, \, \vf (\vx)\right\rangle
         \, d\vx.
    \end{align}
\end{defn}
\noindent All these operators are natural generalizations (for the Gaussian kernel) of the well-studied integral transforms, namely longitudinal, transverse, and momentum ray transform. concepts for classical generalized Radon transforms of tensor fields. We will use the abbreviation \textbf{``\textit{GbRt}"} for \textbf{``Gaussian beam Radon transform"}. Depending on the special values of $k$ and $\ell$, we identify the transform defined in equation \eqref{def:generalized_Gaussian_tensor} with different names. In the table below, we list the values of $k$ and $\ell$, along with the corresponding names of $\Mc^k_{\ell}\vf$ that we will use in this article.
\begin{table}[h]
\centering
\resizebox{\textwidth}{!}{\begin{tabular}{|c|c|c|c|}
\hline
&$\ell=0$ & $\ell=m$ & $1\le \ell \le (m-1)$ \\
\hline
$k=0$ &\makecell{longitudinal GbRt \\ ($\Lc\vf$)} &
\makecell{transverse GbRt \\ ($\Tc\vf$)} &
\makecell{mixed GbRt \\ ($\Mc\vf$)} \\
\hline
$k\ge1$ &\makecell{$k^{\text{th}}$ moment longitudinal GbRt\\($\Lc^k\vf$)} &
\makecell{$k^{\text{th}}$ moment transverse GbRt \\ ($\Tc^k\vf$)} &
\makecell{$k^{\text{th}}$ moment mixed GbRt \\ ($\Mc^k\vf$)} \\
\hline
\end{tabular}}
\caption{Integral transforms considered in this article}
\end{table}\\
We conclude this section with a few key notations and definitions that we will use repeatedly. We start by expanding a function $h$ and the known data $\Mc^k_{\ell}\vf$ in the Fourier series with respect to their angular variables  $\phi$ and $\theta$ as follows:
\begin{align}\label{fourier component function}
h(r\vphi)&=\sum_{n\in \Zb} h_n(r)e^{\I n\phi}, \qquad \ \  \text{with} \qquad \ \ \  h_n(r)= \frac{1}{2\pi}\int_0^{2\pi}h(r\vphi) e^{-\I n\phi}\,d\phi.\\\label{fourier component_Lf}
\Mc^k_{\ell}\vf(s,\theta)&=\sum_{n\in \Zb} (\Mc^k_{\ell}\vf)_n(s)e^{\I n\theta}, \quad \text{with} \quad (\Mc^k_{\ell}\vf )_n(s)= \frac{1}{2\pi}\int_0^{2\pi}(\Mc^k_{\ell}\vf)(s,\theta) e^{-\I n\theta}\,d\theta.
 \end{align}
 Next, we recall the Mellin transform and some of its properties:
 \begin{defn}(\cite{Flajolet_1995})
Let $h$ be a complex-valued integrable function that decays at infinity. Then the Mellin transform for $h$ is denoted by $\Nc h$ and is defined by
\begin{align}\label{def:Mellin_transform}
    \Nc h(\rho)= \int_0^{\infty} h(r)r^{\rho -1}\,dr, \quad Re(\rho) > 1.
\end{align}
\end{defn}
\noindent Here we list some properties of the Mellin transform, which are crucial for the discussion that follows:
 \begin{itemize}
     \item[(i)] $\Nc [r^k h(r)](\rho) = \Nc h(\rho+k)$.
     \item [(ii)] If $ \displaystyle (h\times g)(s)= \int_0^{\infty}h(r)g\left(\frac{s}{r}\right)\frac{dr}{r}$ then taking Mellin transform, we get 
     $$\Nc( (h\times g)(s))(\rho)= \Nc h(\rho)\Nc g(\rho).$$
 \end{itemize}
The inverse Mellin transform of a function $h\in C_c^\infty(\Db)$ is given by \cite{Flajolet_1995,titchmarsh1937fourier}:
\begin{align}\label{def:inverse_mellin}
     h(r)= \lim_{T\to\infty} \frac{1}{2\pi \I}  \int_{t-T\I}^{t+T\I} r
^{-\rho}  \Nc h(\rho) \, d\rho.
\end{align}
\begin{remark}
To compactify several upcoming long calculations, we will use the following shorthand notations:  $\mathfrak{c}_{\theta}=\cos{\theta}$ and $\mathfrak{s}_{\theta}=\sin{\theta}$.
\end{remark}
 \section{Main results}\label{Results}
 In this section, we state the main theoretical findings of this article. The theorems provide a method for reconstructing vector fields and symmetric $2$-tensor fields in $\mathbb{R}^2$ from various combinations of their generalized GbRts. We expect a similar approach and analysis can be applied to tensor fields of any order, but the calculations become quite cumbersome, and a compact way to present them is needed. The main challenge for reconstruction is the deconvolution of the defined integrals. The Gaussian structure of the kernel, together with the use of the Mellin transform, facilitates the deconvolution process.
 To achieve our goal, we adopted the techniques introduced by the authors in \cite{Roy2023}. We now present the results; the proofs will be discussed in the subsequent subsections.  
 \begin{thr}\label{vector_rec:Lf+Tf}
     Let $\vf\in C_c^{\infty}(S^1;\Db).$ Then $\vf$ can be recovered from the knowledge of $\Lc\vf$ and $\Tc\vf.$
 \end{thr}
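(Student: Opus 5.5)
The plan is to reduce Theorem \ref{vector_rec:Lf+Tf} to the scalar Gaussian beam Radon transform \eqref{def:Gauss_beam_scalar}, and then invoke the scalar inversion of \cite{Roy2023}.

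\textbf{Step 1 (decoupling).} For $m=1$, the definition \eqref{def:generalized_Gaussian_tensor} with $k=0$ gives
\begin{align*}
\Lc\vf(s,\theta)=\int_{\Rb^2}K(s-\vtheta\cdot\vx,\vtheta^\perp\cdot\vx)\,\vtheta^\perp\cdot\vf(\vx)\,d\vx,\qquad \Tc\vf(s,\theta)=\int_{\Rb^2}K(s-\vtheta\cdot\vx,\vtheta^\perp\cdot\vx)\,\vtheta\cdot\vf(\vx)\,d\vx .
\end{align*}
With $\vtheta^\perp=(-\mathfrak{s}_\theta,\mathfrak{c}_\theta)$, a direct computation gives
\begin{align*}
\vtheta\cdot\vf+\I\,\vtheta^\perp\cdot\vf=e^{-\I\theta}(f_1+\I f_2),\qquad \vtheta\cdot\vf-\I\,\vtheta^\perp\cdot\vf=e^{\I\theta}(f_1-\I f_2).
\end{align*}
Set $g=f_1+\I f_2\in C_c^\infty(\Db)$, which is complex valued. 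The factor $e^{-\I\theta}$ does not depend on $\vx$, so it can be pulled out of the integral. Since $\Rc$ is linear, this yields
\begin{align*}
\Tc\vf(s,\theta)+\I\,\Lc\vf(s,\theta)=e^{-\I\theta}\,\Rc g(s,\theta),
\end{align*}
so that $\Rc g(s,\theta)=e^{\I\theta}\bigl(\Tc\vf+\I\,\Lc\vf\bigr)(s,\theta)$ is known for all $s\in\Rb$ and $\theta\in[0,2\pi)$.

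\textbf{Step 2 (scalar inversion).} Next I recover $g$ from $\Rc g$. Apply \eqref{def:Gauss_beam_scalar} separately to $\mathrm{Re}\,g$ and $\mathrm{Im}\,g$, or simply use linearity over $\Cb$. In Fourier terms, the relation of Step 1 becomes $(\Rc g)_n=(\Tc\vf)_{n-1}+\I(\Lc\vf)_{n-1}$, so the Fourier coefficients of $\Rc g$ are explicit in terms of the data.

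Write $\vx=r\vphi$. Because the kernel depends only on $\vtheta\cdot\vx=r\cos(\phi-\theta)$ and $\vtheta^\perp\cdot\vx=r\sin(\phi-\theta)$, the transform $\Rc$ is rotation equivariant. Consequently each Fourier mode decouples:
\begin{align*}
(\Rc g)_n(s)=\int_0^\infty g_n(r)\,r\,\kappa_n(s,r)\,dr,\qquad \kappa_n(s,r)=\int_0^{2\pi}K\bigl(s-r\mathfrak{c}_\alpha,\,r\mathfrak{s}_\alpha\bigr)e^{\I n\alpha}\,d\alpha .
\end{align*}
Following \cite{Roy2023}, the Gaussian form of $K$ allows this integral equation to be rewritten, after expanding the exponential and changing variables, in terms of Mellin-type convolutions. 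Property (ii) of the Mellin transform then turns it into a multiplication in $\rho$. Dividing by the transformed kernel and applying the inverse Mellin formula \eqref{def:inverse_mellin} recovers $g_n(r)$ for each $n$, and hence $g$ through \eqref{fourier component function}.

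\textbf{Step 3 (conclusion and main obstacle).} Since $f_1,f_2$ are real, I take $f_1=\mathrm{Re}\,g$ and $f_2=\mathrm{Im}\,g$. As a consistency check, one may also use the companion identity $\Tc\vf-\I\Lc\vf=e^{\I\theta}\Rc\bar g$.

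The only substantive step is Step 2, namely the injectivity and deconvolution of the scalar GbRt on each Fourier mode. Because this is exactly the content of \cite{Roy2023}, I intend to cite it rather than redo it. If a self-contained argument is required, the delicate point is to verify that the Mellin transform of the mode kernel $\kappa_n$ does not vanish on the chosen vertical line $\mathrm{Re}\,\rho=t$, so that the division is legitimate. I would check this first using the explicit Gaussian integrals.
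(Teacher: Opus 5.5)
Your proposal is correct and follows the paper's route. The paper forms exactly $\wh{\Tc\vf}\pm\I\,\wh{\Lc\vf}$, whose integrands carry the factors $e^{\mp\I\theta}(f_1\pm\I f_2)$, reads off the $(n\mp1)$-th angular Fourier coefficients, and then carries out inline the scalar GbRt deconvolution that you delegate to \cite{Roy2023}: the Fourier transform in $s$ turns the angular integral into a Mellin convolution against $g_n$, followed by division by $\Nc g_n(\rho-2)$. The only difference is that the paper recovers $f_1+\I f_2$ and $f_1-\I f_2$ separately rather than using realness of $f_1,f_2$; your companion identity with $\Rc(f_1-\I f_2)$ in place of $\Rc\bar g$ does the same and also covers complex-valued fields.
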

\begin{thr}\label{vector_rec:Lf+L^1f}
     Let $\vf\in C_c^{\infty}(S^1;\Db)$. Then $\vf$ can be recovered from the combination of either $\Lc\vf, \Lc^1\vf$ or $\Tc\vf, \Tc^{1}\vf$.
 \end{thr}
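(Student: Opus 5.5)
The plan is to reduce the vector problem to a family of decoupled scalar deconvolution problems, indexed by the angular frequency $n$, by following the Fourier-series and Mellin-transform strategy of \cite{Roy2023}. Write $\vf=(f_1,f_2)$ and introduce the complex field $g=f_1+\I f_2$. Then
$$\vtheta\cdot\vf=\mathrm{Re}\big(e^{-\I\theta}g\big),\qquad \vtheta^\perp\cdot\vf=\mathrm{Im}\big(e^{-\I\theta}g\big).$$
Hence both $\langle(\vtheta^\perp),\vf\rangle$ (for $\Lc$) and $\langle\vtheta,\vf\rangle$ (for $\Tc$) are linear combinations of $e^{-\I\theta}g$ and $e^{\I\theta}\bar g$. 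Recovering $\vf$ is therefore equivalent to recovering the two scalar functions $g$ and $\bar g$, or equivalently the two families of Fourier coefficients $g_n(r)$ and $(\bar g)_n(r)$ from \eqref{fourier component function}.

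First I would expand $g(r\vphi)=\sum_n g_n(r)e^{\I n\phi}$ and $\bar g$ similarly, and insert these into \eqref{def:generalized_Gaussian_tensor} written in polar coordinates $\vx=r\vphi$. There $\vtheta\cdot\vx=r\cos(\phi-\theta)$ and $\vtheta^\perp\cdot\vx=r\sin(\phi-\theta)$. Substituting $\phi=\theta+\psi$ and computing the $\theta$-Fourier coefficient of the data, I expect
$$(\Lc\vf)_n(s)=\frac{1}{2\I}\int_0^\infty r\Big[g_{n+1}(r)\,A_{n+1}(s,r)-(\bar g)_{n-1}(r)\,A_{n-1}(s,r)\Big]dr,$$
where
$$A_j(s,r)=\int_0^{2\pi}K\big(s-r\cos\psi,\,r\sin\psi\big)e^{\I j\psi}\,d\psi .$$
For $\Lc^1\vf$ the same formula holds with the extra weight $r\sin\psi$ in $A_j$. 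Writing $r\sin\psi=\tfrac{r}{2\I}(e^{\I\psi}-e^{-\I\psi})$ turns this weight into shifts $A_{j\pm1}$ multiplied by $r$. The analogous expressions for $\Tc,\Tc^1$ come with a plus sign and real/imaginary parts interchanged. Thus each pair $(\Lc\vf)_n,(\Lc^1\vf)_n$ (respectively $(\Tc\vf)_n,(\Tc^1\vf)_n$) involves only the two unknowns $g_{n+1}$ and $(\bar g)_{n-1}$.

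Next, following \cite{Roy2023}, I would exploit the structure of the kernel $K$ to write each kernel $A_j(s,r)$, after rescaling, as a Mellin convolution in the sense of property (ii), of the form $\int_0^\infty h(r)\,G_j(s/r)\,dr/r$. Taking the Mellin transform in $s$ and using properties (i) and (ii), each coefficient identity becomes algebraic. For each $n$ and each $\rho$ on a suitable vertical line this yields a $2\times 2$ linear system
$$\begin{pmatrix}(\Nc(\Lc\vf)_n)(\rho)\\(\Nc(\Lc^1\vf)_n)(\rho)\end{pmatrix}=M_n(\rho)\begin{pmatrix}\Nc g_{n+1}(\rho+c)\\ \Nc(\bar g)_{n-1}(\rho+c)\end{pmatrix},$$
with the moment row shifted by one more in $\rho$ because of the extra factor $r$. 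If $M_n(\rho)$ is invertible for almost every $\rho$, one solves for the Mellin transforms and applies \eqref{def:inverse_mellin} to obtain $g_{n+1}$ and $(\bar g)_{n-1}$ for every $n$. This recovers $g$, and hence $\vf=(\mathrm{Re}\,g,\mathrm{Im}\,g)$. The $\Tc,\Tc^1$ case is identical with different signs in $M_n$.

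The main obstacle is proving that $\det M_n(\rho)\neq 0$, or at least that it vanishes only on a discrete set. Continuity of the Mellin transforms in $\rho$ then fills in the gaps. The entries of $M_n$ are Mellin transforms of the angular Gaussian-beam kernels $G_{n\pm1}$ and $G_{n\pm2}$, coming from the $\sin\psi$ weight. The determinant is a combination of products of these, and cancellation cannot be ruled out a priori, unlike the classical ray case where $K$ is a delta function and $\Lc$ has a nontrivial kernel. My first attempt would be to compute these Mellin transforms explicitly in the two limits: $w\equiv w_0$ large compared with the support, and $w_0\to0$. In these limits the entries reduce to Gamma-function or Bessel-type expressions whose determinant can be checked by hand. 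I would then use analyticity of $\det M_n(\rho)$ in $\rho$ to conclude that its zeros are isolated, so that the linear system determines the unknowns uniquely almost everywhere.
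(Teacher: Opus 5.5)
Your overall scheme matches the paper's proof. You expand in the angle and observe that the $n$-th coefficients of $\Lc\vf$ and $\Lc^1\vf$ involve only $g_{n+1}$ and $(\bar g)_{n-1}$; the paper's unknowns are $(f_2-\I f_1)_{n+1}=-\I g_{n+1}$ and $(f_2+\I f_1)_{n-1}=\I(\bar g)_{n-1}$. You then get a $2\times 2$ system per frequency, with the moment row shifted by one in $\rho$, and finish with Mellin inversion.

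\textbf{The gap: the Mellin step does not work in $s$.} In the $s$ variable, $K(\cdot,t)$ is $w_0\sqrt{\pi}$ times the convolution of the normalized Gaussians $\frac{1}{\sqrt{\pi}a}e^{-x^2/a^2}$ with $a=w_0$ and $a=w_0|t|/z_R$. With $t=r\sin\psi$, only the second Gaussian scales with $r$; the first acts as a fixed convolution in $s$. So no rescaling turns $A_j(s,r)$ into a kernel of the form required by property (ii): for instance, $A_j(\lambda s,\lambda r)$ is not a power of $\lambda$ times $A_j(s,r)$. Consequently a Mellin transform in $s$ does not diagonalize your identities.

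The missing idea is to Fourier transform in $s$ first, which turns that fixed convolution into a multiplier:
\[
\widehat{K}(\sigma,t)=w_0\sqrt{\pi}\,e^{-w_0^2\sigma^2/4}\,e^{-w_0^2t^2\sigma^2/(4z_R^2)},
\]
\[
\widehat{A_j}(\sigma,r)=w_0\sqrt{\pi}\,e^{-w_0^2\sigma^2/4}\int_0^{2\pi}\exp\Big(-\frac{w_0^2(r\sigma)^2\sin^2\psi}{4z_R^2}-\I\, r\sigma\cos\psi\Big)e^{\I j\psi}\,d\psi .
\]
After dividing by $e^{-w_0^2\sigma^2/4}$, the kernel depends on $r\sigma$ alone. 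Replacing $\sigma$ by $\sigma^{-1}$ gives the paper's identities of the form $\int_0^\infty h(r)\,g_j(\sigma/r)\,r\,dr$. The Mellin transform is taken in this frequency variable, not in $s$. With this correction, your $2\times2$ system and its solution by Cramer's rule are exactly the paper's elimination.

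\textbf{The determinant.} You are right that invertibility of $M_n(\rho)$ is the crux, and the paper does not prove it. It simply divides by
\[
\Nc g_{n-2}(\rho-2)\,\Nc\big(g_{n-1}-g_{n+1}\big)(\rho-3)-\Nc g_{n}(\rho-2)\,\Nc\big(g_{n-3}-g_{n-1}\big)(\rho-3),
\]
and later by $\Nc g_n(\rho-2)$. Here $g_j$ is the paper's angular kernel, not your $g=f_1+\I f_2$.

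Your plan for this point is incomplete, though. The angular kernels depend only on the ratio $w_0/z_R$, and both of your limits are degenerate values of that ratio rather than the given one. Analyticity in $\rho$ gives isolated zeros only once you know the determinant does not vanish identically at the actual value of $w_0/z_R$. A limit computation does not provide that. Even combined with analyticity in the parameter, it would at best exclude a discrete set of exceptional ratios. An argument valid at the given parameters is still needed.
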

  \begin{thr}\label{2-tenor_rec:Lf+Tf+Mf}
     Let $\vf\in C_c^{\infty}(S^2;\Db).$ Then $\vf$ can be recovered from the knowledge of $\Lc\vf$, $\Tc\vf$ and $\Mc\vf.$
 \end{thr}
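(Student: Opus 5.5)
The plan is to reduce Theorem \ref{2-tenor_rec:Lf+Tf+Mf} to three scalar problems, each solved by the deconvolution technique of \cite{Roy2023}. For $m=2$ write $\vf=(f_{11},f_{12},f_{22})$. Then
$$\langle(\vtheta^\perp)^2,\vf\rangle=\mathfrak{s}_\theta^2f_{11}-2\mathfrak{s}_\theta\mathfrak{c}_\theta f_{12}+\mathfrak{c}_\theta^2f_{22},\qquad \langle\vtheta^2,\vf\rangle=\mathfrak{c}_\theta^2f_{11}+2\mathfrak{s}_\theta\mathfrak{c}_\theta f_{12}+\mathfrak{s}_\theta^2f_{22},$$
$$\langle\vtheta\vtheta^\perp,\vf\rangle=-\mathfrak{s}_\theta\mathfrak{c}_\theta f_{11}+(\mathfrak{c}_\theta^2-\mathfrak{s}_\theta^2)f_{12}+\mathfrak{s}_\theta\mathfrak{c}_\theta f_{22}.$$
The kernel $K$ and the weights depend only on $\theta$, not on $\vx$, so the algebra can be done on the data side. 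Adding gives $\Lc\vf+\Tc\vf=\Rc(f_{11}+f_{22})$, which is the scalar GbRt of the trace. Using $\cos2\theta$ and $\sin2\theta$ we get
$$\Tc\vf-\Lc\vf=\Rc\bigl(\cos2\theta\,(f_{22}-f_{11})-2\sin2\theta\, f_{12}\bigr),\qquad \Mc\vf=\Rc\bigl(\tfrac12\sin2\theta\,(f_{22}-f_{11})+\cos2\theta\, f_{12}\bigr).$$
The factors $\cos2\theta$ and $\sin2\theta$ come out of the $\vx$-integral. Set $g=f_{22}-f_{11}$ and $h=f_{12}$. Then
$$D_1:=\tfrac12(\Tc\vf-\Lc\vf),\qquad D_2:=\Mc\vf$$
satisfy $\cos2\theta\,D_1+\sin2\theta\,D_2=\tfrac12\Rc g$ and $-\sin2\theta\,D_1+\cos2\theta\,D_2=\Rc h$. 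So from the three data we obtain $\Rc(f_{11}+f_{22})$, $\Rc g$ and $\Rc h$ pointwise in $(s,\theta)$.

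Next I would show that a scalar $u\in C_c^\infty(\Db)$ is recovered from $\Rc u$. I would do this in the way that (I expect) was used for Theorem \ref{vector_rec:Lf+Tf}, so that it can be cited rather than repeated.

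1. Expand $u(r\vphi)=\sum_n u_n(r)e^{\I n\phi}$ and write $\vx=r\vphi$. Then $\vtheta\cdot\vx=r\cos(\phi-\theta)$ and $\vtheta^\perp\cdot\vx=r\sin(\phi-\theta)$.
2. The substitution $\psi=\phi-\theta$ gives $(\Rc u)_n(s)=\int_0^\infty u_n(r)\,r\,A_n(s,r)\,dr$, where $A_n(s,r)=\int_0^{2\pi}K(s-r\cos\psi,r\sin\psi)e^{\I n\psi}d\psi$.
3. Take the Fourier transform in $s$. The Gaussian in $s-r\cos\psi$ gives a factor $e^{-\I\sigma r\cos\psi}$ times $\sqrt{\pi}\,w\,e^{-\sigma^2w^2/4}$, with $w=w(r\sin\psi)$.
4. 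Expand in Bessel-type angular integrals so that the $n$-th mode becomes a Mellin-type convolution in $r$. Property (ii) of the Mellin transform then converts it into a product $\Nc[\cdot](\rho)\,\Nc[r u_n](\rho)$.
5. Divide by the known kernel factor and apply the inverse Mellin formula \eqref{def:inverse_mellin}. This yields $u_n$, hence $u$.

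With this lemma in hand we recover $f_{11}+f_{22}$, $f_{22}-f_{11}$ and $f_{12}$. From these,
$$f_{11}=\tfrac12\bigl((f_{11}+f_{22})-g\bigr),\qquad f_{22}=\tfrac12\bigl((f_{11}+f_{22})+g\bigr),\qquad f_{12}=h.$$

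The first step is elementary algebra; its only subtle point is that the $\theta$-dependent weights commute with $\Rc$. The main obstacle is the scalar deconvolution, namely step 4 and the division in step 5. It requires
\begin{itemize}
\item the beam-width dependence $w(r\sin\psi)$ to be separated into a genuine Mellin convolution structure;
\item the Mellin multiplier to be nonvanishing on a vertical line $\mathrm{Re}\,\rho=t$, so that division is legitimate.
\end{itemize}
Here I would lean on the scalar inversion of \cite{Roy2023}, which the paper explicitly adopts. If that result is available in the form ``$u$ is determined by $\Rc u$'', the theorem follows immediately from the reduction above.
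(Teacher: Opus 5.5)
Your proposal is correct and is essentially the paper's proof: the paper likewise reduces to scalar GbRts of $f_{11}+f_{22}$ and of $f_{22}-f_{11}\pm 2\I f_{12}$ via $\Lc\vf+\Tc\vf$ and $\Lc\vf-\Tc\vf\pm2\I\Mc\vf$, with the phases $e^{\pm 2\I\theta}$ absorbed by a shift of Fourier index; this is just the complex form of your rotation by $2\theta$, and the paper then carries out the Fourier--Mellin deconvolution explicitly instead of citing it. Two slips need fixing: the correct identity is $\Lc\vf-\Tc\vf=\Rc\bigl(\cos2\theta\,(f_{22}-f_{11})-2\sin2\theta\,f_{12}\bigr)$, so you need $D_1=\tfrac12(\Lc\vf-\Tc\vf)$; and because of the prefactor $w_0/w$ in $K$ one has $\wh{K}(\sigma,t)=w_0\sqrt{\pi}\,e^{-\sigma^2w^2(t)/4}$ with $\sigma^2w^2(r\mathfrak{s}_\psi)=w_0^2\sigma^2+w_0^2(\sigma r\,\mathfrak{s}_\psi)^2/z_R^2$, so the Mellin-convolution structure in your step 4 (a $\sigma$-only factor times a function of $\sigma r$) is immediate rather than an obstacle.
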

  \begin{thr}\label{2-tensor_rec: Lf+:L^1f+L^2f}
     Let $\vf\in C_c^{\infty}(S^2;\Db)$. Then $\vf$ can be recovered from the combination of either $\Lc\vf, \Lc^1\vf, \Lc^2\vf$ or $\Tc\vf, \Tc^1\vf,\Tc^{2}\vf .$
 \end{thr}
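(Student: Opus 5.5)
The plan is to reduce Theorem \ref{2-tensor_rec: Lf+:L^1f+L^2f} to a family of $3\times 3$ linear systems, one per angular frequency $n$, for scalar radial unknowns, and then invert each system with the Mellin machinery used in \cite{Roy2023}.

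\textbf{Step 1: scalar unknowns.} Write the symmetric $2$-tensor field through the three scalar functions
$$g_0=f_{11}+f_{22},\qquad g_{\pm}=f_{11}-f_{22}\mp 2\I f_{12}.$$
Recovering $g_0,g_+,g_-$ is equivalent to recovering $\vf$. A direct computation gives
$$\langle(\vtheta^\perp)^2,\vf\rangle=\tfrac12 g_0-\tfrac14\bigl(e^{2\I\theta}g_++e^{-2\I\theta}g_-\bigr),\qquad \langle\vtheta^2,\vf\rangle=\tfrac12 g_0+\tfrac14\bigl(e^{2\I\theta}g_++e^{-2\I\theta}g_-\bigr).$$
The transverse case therefore differs from the longitudinal one only in the signs of the $g_\pm$ terms, and I will treat the two together.

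\textbf{Step 2: Fourier coefficients of the data.} Insert this into \eqref{def:generalized_Gaussian_tensor} and pass to polar coordinates $\vx=r\vphi$, so that $\vtheta\cdot\vx=r\cos(\phi-\theta)$ and $\vtheta^\perp\cdot\vx=r\sin(\phi-\theta)$. Expand $g_0,g_\pm$ in angular Fourier series as in \eqref{fourier component function}, and substitute $\psi=\phi-\theta$. Taking the $n$-th Fourier coefficient in $\theta$ as in \eqref{fourier component_Lf} then gives
$$(\Lc^k\vf)_n(s)=\int_0^\infty r^{k+1}\Bigl[\tfrac12 (g_0)_n(r)A^k_{n}(s,r)-\tfrac14 (g_+)_{n-2}(r)A^k_{n-2}(s,r)-\tfrac14 (g_-)_{n+2}(r)A^k_{n+2}(s,r)\Bigr]dr,$$
for $k=0,1,2$. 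Here the angular kernels are
$$A^k_j(s,r)=\int_0^{2\pi}\sin^k\psi\,K(s-r\cos\psi,\,r\sin\psi)\,e^{\I j\psi}\,d\psi .$$
These are the same angular kernels (up to the weight $\sin^k\psi$) that appear in the scalar analysis of \cite{Roy2023}. Expanding $\sin^k\psi$ in exponentials writes $A^k_j$ as a combination of the scalar kernels $A^0_{j\pm1}$ (for $k=1$) and $A^0_{j},A^0_{j\pm2}$ (for $k=2$). So for each fixed $n$, the three data $(\Lc^k\vf)_n$, $k=0,1,2$, are linear in the three unknowns $(g_0)_n$, $(g_+)_{n-2}$, $(g_-)_{n+2}$, through operators built from the scalar kernels. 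The extra factor $r^k$ is absorbed by property (i) of the Mellin transform.

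\textbf{Step 3: diagonalise and invert.} Follow \cite{Roy2023} and use the Gaussian structure of $K$ to bring each $A^0_j$ into a Mellin-convolution form. In the Mellin domain each operator then becomes multiplication by an explicit function of $\rho$, say $m_j(\rho)$. Multiplication by $r^k$ shifts these multipliers to $m_j(\rho+k)$ by property (i), and property (ii) turns the remaining convolutions into products. For each $n$ this yields a $3\times3$ linear system
$$\mathbf{D}_n(\rho)\,\bigl(\Nc(g_0)_n,\ \Nc(g_+)_{n-2},\ \Nc(g_-)_{n+2}\bigr)^{T}=\bigl(\Nc(\Lc\vf)_n,\ \Nc(\Lc^1\vf)_n,\ \Nc(\Lc^2\vf)_n\bigr)^{T},$$
with an explicit matrix $\mathbf{D}_n(\rho)$. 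Solve this system, then apply the inverse Mellin transform \eqref{def:inverse_mellin} to recover the radial coefficients. Summing the Fourier series gives $g_0$ and $g_\pm$, and hence $\vf$. The transverse data $\Tc^k\vf$ give the same system with the signs in the last two columns flipped, so its determinant agrees with that of $\mathbf{D}_n(\rho)$ up to sign.

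\textbf{Main obstacle.} The hard step is proving that $\det\mathbf{D}_n(\rho)\neq0$ on a vertical line $\mathrm{Re}\,\rho=t$ (up to a discrete set), for every $n$. Only then is the Mellin inversion legitimate. I would first compute the Mellin multipliers of $A^0_j$ in closed form as Gamma-function and hypergeometric-type expressions, exploiting $K$ being Gaussian in $x_1$ and $w$ depending only on $x_2^2$. I would then try to factor the determinant into a product of such multipliers, much as the vector case of Theorem \ref{vector_rec:Lf+L^1f} presumably reduces to a $2\times2$ system. If an exact factorisation fails, the fallback is an asymptotic argument: show the determinant is a nonzero analytic function of $\rho$ in a strip, so its zeros are isolated and the contour in \eqref{def:inverse_mellin} can be chosen to avoid them.
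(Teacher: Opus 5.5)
\paragraph{Overall structure and the gap in Step 3.} Your architecture matches the paper's. You split $\vf$ into $f_{11}+f_{22}$ and $f_{22}-f_{11}\pm 2\I f_{12}$ (your $g_0$ and $-g_\pm$). You take the $n$-th angular Fourier coefficient so that only $(g_0)_n$, $(g_+)_{n-2}$, $(g_-)_{n+2}$ appear, and you expand $\sin^k\psi$ into exponentials. You then solve a $3\times 3$ system in the Mellin variable for each $n$. The paper does the same thing in two stages: it eliminates $\Nc(f_{11}+f_{22})_n$ using the $\Lc\vf$ equation and then solves the remaining $2\times 2$ system. Your Steps 1 and 2 are correct.

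The gap is in Step 3, which is exactly where the deconvolution happens: the Mellin transform cannot be taken in the variable $s$. The kernel
$K(s-r\cos\psi,r\sin\psi)=(1+r^2\sin^2\psi/z_R^2)^{-1/2}\exp\bigl(-(s-r\cos\psi)^2/(w_0^2(1+r^2\sin^2\psi/z_R^2))\bigr)$
contains the fixed lengths $w_0$ and $z_R$. Hence $A^0_j(s,r)$ is not of the form $r^aG(s/r)$ or $r^aG(rs)$, and the Mellin transform in $s$ of $\int_0^\infty F(r)A^0_j(s,r)\,r\,dr$ does not factor. Your displayed system, with right-hand side $\bigl(\Nc(\Lc\vf)_n,\Nc(\Lc^1\vf)_n,\Nc(\Lc^2\vf)_n\bigr)$, is therefore false. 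The phrase ``use the Gaussian structure of $K$'' does not say how to repair it.

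\paragraph{The missing step: Fourier transform in $s$ first.} Since
\[
\int_{\Rb}e^{-\I s\sigma}K(s-z,t)\,ds=w_0\sqrt{\pi}\,e^{-\I\sigma z}\,e^{-w_0^2\sigma^2/4}\,e^{-w_0^2t^2\sigma^2/(4z_R^2)},
\]
the amplitude $w_0/w(t)$ exactly cancels the width factor produced by transforming the Gaussian. The only non-homogeneous factor, $e^{-w_0^2\sigma^2/4}$, does not depend on $(r,\psi)$ and can be divided out. What remains depends on $(r,\sigma)$ only through $r\sigma$.

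After the substitution $\sigma\mapsto\sigma^{-1}$, each term of the $k$-th equation becomes a Mellin convolution $(r^{k+2}F)\times\kappa$. Here $F$ is one of $(g_0)_n,(g_+)_{n-2},(g_-)_{n+2}$. The kernel $\kappa$ is, up to constants, $\gamma_j$, $\gamma_{j-1}-\gamma_{j+1}$ or $2\gamma_j-\gamma_{j-2}-\gamma_{j+2}$ for $k=0,1,2$. The $\gamma_j$ are the angular Fourier coefficients of $\gamma(t,\psi)=\exp\bigl(-w_0^2\sin^2\psi/(4z_R^2t^2)-\I t^{-1}\cos\psi\bigr)$, which is the paper's $g(t,\psi)$. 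Only at this point does property (ii) give a multiplicative system. Its data are $\Nc\bigl[e^{w_0^2/(4\sigma^2)}\wh{\Lc^k\vf}_n(\sigma^{-1})\bigr](\rho-2-k)$, and its matrix entries are evaluated at $\rho-2$, $\rho-3$, $\rho-4$, as in the paper.

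\paragraph{Your ``main obstacle'' and a minor point.} The paper does not prove $\det\mathbf{D}_n(\rho)\neq 0$ either: it simply says ``solving the equations'' and divides by $\Nc g_n(\rho-2)$ without comment. On that point you are not behind the paper. Your analytic fallback would still need a proof that the determinant is not identically zero. Finally, negating two columns leaves a determinant unchanged, so the transverse and longitudinal determinants are equal, not merely equal up to sign.
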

 \begin{remark}\label{Rmk:2-tensor_mixed_mom_partial}
   Let $\vf\in C_c^{\infty}(S^2;\Db)$. Then from the knowledge of $\Mc\vf,\Mc^1\vf,\Mc^2\vf$, we cannot fully recover a symmetric $2$-tensor field $\vf$. Therefore, for a full reconstruction of $\vf$, we require additional data, such as $\Lc\vf$, $\Tc\vf$, etc.
 \end{remark}
\subsection{Proof of Theorem \ref{vector_rec:Lf+Tf}}
The objective of this subsection is to recover $\vf=(f_1,f_2)$ from the knowledge of $\Lc\vf$ and $\Tc\vf$. To do this, we first apply the Fourier transform to the known data and then expand it in the Fourier series with respect to their angular variables. Then, the goal will be to find the Fourier coefficients of components of $\vf$ in terms of the Fourier coefficients of known integral transforms.
\begin{proof}
Taking the Fourier transform of $\Lc\vf(s,\theta)$ with respect to the variable $s$, we get
\begin{align}\label{eq:longi_fourier}
    \wh{\Lc\vf} (\sigma,\theta) &= \int_{\Rb} e^{-\I s\sigma}\Lc (s,\theta)\,ds = \int_{\Rb^2} e^{-\I \sigma (\vtheta\cdot \vx)}\wh{K}(\sigma,\vtheta^\perp \cdot \vx)\, \vtheta^\perp \cdot \vf (\vx)\, d\vx.
\end{align}
In a similar way, the Fourier transform of $\Tc\vf(s,
\theta)$ with respect to $s$ gives 
\begin{align}\label{eq:trans_fourier}
    \wh{\Tc\vf}(\sigma,\theta)=\intRR e^{-\I \sigma (\vtheta\cdot \vx)}\wh{K}(\sigma,\vtheta^\perp \cdot \vx)\, \vtheta \cdot \vf (\vx)\, d\vx, 
\end{align}
where $\wh{K}(\sigma,\vtheta^\perp \cdot \vx)= w_0\sqrt{\pi}\exp\left(-\frac{w_0^2\left(1+\left(\frac{\vtheta^\perp \cdot \vx}{z_R}\right)^2\right)\sigma^2}{4}\right)$, since $\wh{\exp({-\frac{a s^2}{2}})}(\sigma) = \sqrt{\frac{2\pi}{a}}\exp({-\frac{\sigma^2}{2a}}).$\\
Multiplying \eqref{eq:longi_fourier} by $\I$ and adding it to \eqref{eq:trans_fourier} gives
\begin{align*}
    &\wh{\Tc\vf}(\sigma,\theta) +\I \wh{\Lc\vf}(\sigma,\theta)=\intRR e^{-\I \sigma (\vtheta\cdot \vx)}\wh{K}(\sigma,\vtheta^\perp \cdot \vx)\, e^{-\I\theta}(f_1(\vx)+\I f_2(\vx))\, d\vx.
\end{align*}
Changing the coordinates from Cartesian to polar by substituting $\vx=r\vphi$, where $\vphi= (\cos\phi, \sin\phi)$, we have
\begin{align*}
     &\wh{\Tc\vf}(\sigma,\theta) +\I \wh{\Lc\vf}(\sigma,\theta)= \int_0^{2\pi}\int_0^{\infty} e^{-\I \sigma r \mathfrak{c}_{(\theta-\phi)}} \wh{K}(\sigma,-r \mathfrak{s}_{(\theta-\phi)})e^{-\I \theta}(f_1(r\vphi)+\I f_2(r\vphi))\,r\,dr\,d\phi.
\end{align*}
 Computing the $(n-1)^{th}$ Fourier coefficients of $\wh{\Tc\vf}+\I \wh{\Lc\vf}$, we have
\begin{align*}
    (\wh{\Tc\vf}+\I \wh{\Lc\vf})_{n-1}(\sigma) &= \frac{1}{2\pi}\int_0^{2\pi}(\wh{\Tc\vf}+\I \wh{\Lc\vf})(\sigma,\theta)e^{-\I(n-1)\theta}\,d\theta\\
    & = \frac{1}{2\pi}\int_0^{2\pi}\int_0^{2\pi} \int_0^\infty e^{-\I \sigma r\mathfrak{c}_{(\theta-\phi)}}e^{-\I n\theta}\wh{K}(\sigma,-r\mathfrak{s}_{(\theta-\phi)})(f_1(r\vphi)+\I f_2(r\vphi))\,r\,dr\,d\phi\,d\theta.
\end{align*}
    Substituting $\theta-\phi=\psi$, we get
    \begin{align*}
    &(\wh{\Tc\vf}+\I \wh{\Lc\vf})_{n-1}(\sigma)\\   &\quad= \frac{1}{2\pi}\int_0^{2\pi}\int_0^{2\pi} \int_0^\infty e^{-\I \sigma r\mathfrak{c}_{\psi}}e^{-\I n(\phi+\psi)}\wh{K}(\sigma,-r\mathfrak{s}_{\psi})(f_1(r\vphi)+\I f_2(r\vphi))\,r\,dr\,d\phi\,d\psi\\
    &\quad= w_0\sqrt{\pi}\int_0^{2\pi} \int_0^\infty e^{-\I \sigma r\mathfrak{c}_{\psi}}e^{-\I n\psi} \exp\left(-\frac{w_0^2\left(1+\left(\frac{r\mathfrak{s}_{\psi}}{z_R}\right)^2\right)\sigma^2}{4}\right)\{(f_1)_n(r)+ \I (f_2)_n(r)\} \,r\,dr\,d\psi\\
    &\quad= w_0\sqrt{\pi} e^{-\frac{w_0^2\sigma^2}{4}}\int_0^\infty\left[\int_0^{2\pi}e^{-\I \sigma r\mathfrak{c}_{\psi}}e^{-\I n\psi}\exp\left(-\frac{w_0^2\left(\frac{r\mathfrak{s}_{\psi}}{z_R}\right)^2\sigma^2}{4}\right)\,d\psi\right]\{(f_1)_n(r)+ \I (f_2)_n(r)\} \,r\,dr\\
    &\quad= 2 w_0\pi^{3/2}e^{-\frac{w_0^2\sigma^2}{4}}\int_0^\infty \{(f_1)_n(r)+ \I (f_2)_n(r)\} g_n(r^{-1}\sigma^{-1}) \,r\,dr,
\end{align*}
 where $ \displaystyle g(t,\psi)= \exp\left( -\frac{w_0^2t^{-2}\mathfrak{s}^2_{\psi}}{4z_{R}^2}-\I t^{-1}\mathfrak{c}_{\psi}\right)$ and $ \displaystyle g_n(t)=\frac{1}{2\pi}\int_{0}^{2\pi}e^{-\I n\psi}g(t,\psi)\,d\psi.$
 \vspace{2mm}\\
The above equation implies
\begin{align}\label{eq:fourier_longi_trans_(n-1)}
    \frac{e^{\frac{w_0^2}{4\sigma^2}}}{2w_0 \pi^{3/2}} (\wh{\Tc\vf}+\I \wh{\Lc\vf})_{n-1}(\sigma^{-1}) &= \int_0^\infty \{(f_1)_n(r)+ \I (f_2)_n(r)\} g_n\left(\frac{\sigma}{r}\right) \,r\,dr \nonumber\\ &=r^2\{(f_1)_n(r)+ \I (f_2)_n(r)\}\times g_n(\sigma).
 \end{align}
Applying the Mellin transform on both sides of the above equation \eqref{eq:fourier_longi_trans_(n-1)} gives 
 \begin{align}\label{eq:Mellin_f1+if2}
 \Nc\left[ \frac{e^{\frac{w_0^2}{4\sigma^2}}}{2w_0 \pi^{3/2}}(\wh{\Tc\vf}+\I \wh{\Lc\vf})_{n-1}(\sigma^{-1})\right](\rho)
&= \Nc\{(f_1)_n+ \I (f_2)_n\}(\rho+2)\Nc g_n(\rho)\nonumber\\
\implies\quad \qquad \qquad \qquad \Nc\{(f_1)_n+ \I (f_2)_n\}(\rho) &= \frac{\Nc\left[ \frac{e^{\frac{w_0^2}{4\sigma^2}}}{2w_0 \pi^{3/2}}(\wh{\Tc\vf}+\I \wh{\Lc\vf})_{n-1}(\sigma^{-1})\right](\rho-2)}{\Nc g_n(\rho-2)}.
 \end{align}
Next, multiplying equation \eqref{eq:longi_fourier} by $\I$ and subtracting it from equation \eqref{eq:trans_fourier}, we have
\begin{align*}
    \wh{\Tc\vf}(\sigma,\theta) -\I \wh{\Lc\vf}(\sigma,\theta)=\intRR e^{-\I \sigma (\vtheta\cdot \vx)}\wh{K}(\sigma,\vtheta^\perp \cdot \vx)\, e^{\I\theta}(f_1(\vx)-\I f_2(\vx))\, d\vx.
\end{align*}
Now we compute the $(n+1)^{th}$ Fourier coefficients of $\wh{\Tc\vf}-\I \wh{\Lc\vf}$. Following a similar procedure to the above, we obtain
\begin{align*}
     (\wh{\Tc\vf}-\I \wh{\Lc\vf})_{n+1}(\sigma)&= 2 w_0 \pi^{3/2}e^{-\frac{w_0^2\sigma^2}{4}}\int_0^\infty \{(f_1)_n(r)-\I (f_2)_n(r)\} g_n(r^{-1}\sigma^{-1}) \,r\,dr\\
     \implies  \frac{e^{\frac{w_0^2}{4\sigma^2}}}{2w_0 \pi^{3/2}}(\wh{\Tc\vf}-\I \wh{\Lc\vf})_{n+1}(\sigma^{-1})&= r^2\{(f_1)_n(r)- \I (f_2)_n(r)\}\times g_n(\sigma).
\end{align*}
By applying the Mellin transform, we have
\begin{align}\label{eq:Mellin_f1-if2}
    \Nc\{(f_1)_n - \I (f_2)_n\}(\rho) &= \frac{\Nc\left[ \frac{e^{\frac{w_0^2}{4\sigma^2}}}{2w_0 \pi^{3/2}}(\wh{\Tc\vf}-\I \wh{\Lc\vf})_{n+1}(\sigma^{-1})\right](\rho-2)}{\Nc g_n(\rho-2)}.
\end{align}
Using the inversion of the Mellin transform  \eqref{def:inverse_mellin}, from equations \eqref{eq:Mellin_f1+if2} and \eqref{eq:Mellin_f1-if2}, we can recover $(f_1)_n+ \I (f_2)_n$ and $(f_1)_n- \I (f_2)_n$ respectively. Thus, we can recover $(f_1)_n $ and $(f_2)_n $ from the knowledge of $\Lc\vf$ and $\Tc\vf$ and therefore we have reconstructed $\vf$. This completes the proof.
\end{proof}
\subsection{Proof of Theorem \ref{vector_rec:Lf+L^1f}}
The goal of the subsection is to prove recovery of a vector field $\vf$ from the knowledge of longitudinal GbRt and its first moment ($\Lc\vf, \Lc^1\vf$) or transverse GbRt and its first moment ($\Tc\vf, \Tc^1\vf$). To avoid repetition, we provide only the reconstruction of  $\vf$ for longitudinal GbRt and its first moment here. Similar techniques can be applied to prove the recovery from transverse GbRt and its first moment.
\begin{proof}
Taking the Fourier transform of $\Lc\vf$ with respect to $s$ variable,  we get (see equation \eqref{eq:longi_fourier})
\begin{align*}
\wh{\Lc\vf} (\sigma,\theta) &= \int_{\Rb^2} e^{-\I \sigma (\vtheta\cdot \vx)}\wh{K}(\sigma,\vtheta^\perp \cdot \vx)\, \vtheta^\perp \cdot \vf (\vx)\, d\vx.
\end{align*}
Changing the coordinates from Cartesian to polar and computing the $n^{th}$ Fourier coefficients of $ \wh{\Lc\vf}$, we get
  \begin{align*}
    \wh{\Lc\vf}_n(\sigma)
    &= \frac{1}{2\pi}\int_0^{2\pi} \int_0^{2\pi} \int_0^{\infty} 
    e^{-\I n\theta} e^{-i\sigma r\mathfrak{c}_{(\theta-\phi)}} \wh{K}(\sigma,-r\mathfrak{s}_{(\theta-\phi)}) (-\mathfrak{s}_{\theta}f_1(r\vphi)+\mathfrak{c}_{\theta}f_2(r\vphi)) \, r \, dr \, d\phi \, d\theta.
\end{align*}
Using $\displaystyle \mathfrak{c}_{\theta}= \frac{e^{\I \theta} + e^{-\I \theta}}{2}$ and $\displaystyle  \mathfrak{s}_{\theta}= \frac{e^{\I \theta} - e^{-\I \theta}}{2 \I}$, we have
\begin{align*}
   2\wh{\Lc\vf}_n(\sigma)
    =& \frac{1}{2\pi}\int_0^{2\pi} \int_0^{2\pi} \int_0^{\infty} 
    e^{-\I (n-1)\theta} e^{-i\sigma r\mathfrak{c}_{(\theta-\phi)}} \wh{K}(\sigma,-r\mathfrak{s}_{(\theta-\phi)}) (f_2+\I f_1)(r\vphi) \, r \, dr \, d\phi \, d\theta 
    \nonumber \\
    & + \frac{1}{2\pi} \int_0^{2\pi} \int_0^{2\pi} \int_0^{\infty} 
    e^{-\I (n+1)\theta} e^{-i\sigma r\mathfrak{c}_{(\theta-\phi)}} \wh{K}(\sigma,-r\mathfrak{s}_{(\theta-\phi)}) (f_2-\I f_1)(r\vphi) \, r \, dr \, d\phi \, d\theta.
    \end{align*}
Put $\theta-\phi=\psi$ in the above equation to get 
 \begin{align*}
   2\wh{\Lc\vf}_n(\sigma)  &= \frac{1}{2\pi} \int_0^{2\pi} \int_0^{2\pi} \int_0^{\infty} 
    e^{-\I (n-1)(\phi+\psi)} e^{-i\sigma r\mathfrak{c}_{\psi}} \wh{K}(\sigma,-r\mathfrak{s}_{\psi}) (f_2+\I f_1)(r\vphi) \, r \, dr \, d\phi \, d\psi 
    \nonumber \\
    &\quad + \frac{1}{2\pi} \int_0^{2\pi} \int_0^{2\pi} \int_0^{\infty} 
    e^{-\I (n+1)(\phi+\psi)} e^{-i\sigma r\mathfrak{c}_{\psi}} \wh{K}(\sigma,-r\mathfrak{s}_{\psi}) (f_2-\I f_1)(r\vphi) \, r \, dr \, d\phi \, d\psi.
    \end{align*}
Using the expression of the Fourier transform of the Gaussian kernel, we have  
\begin{align*}    
2\wh{\Lc\vf}_n(\sigma)&= 2w_0 \pi^{3/2} e^{-\frac{w_0^2\sigma^2}{4}} \Bigg[
\int_0^{\infty} (f_2+\I f_1)_{n-1}(r)\, g_{n-1}(r^{-1}\sigma^{-1})\, r\, dr \\ & \quad \qquad \qquad \qquad \,\,\,\,+ \int_0^{\infty} (f_2-\I f_1)_{n+1}(r)\, g_{n+1}(r^{-1}\sigma^{-1})\, r\, dr \Bigg]
\end{align*}
Hence we have
\begin{align*} \label{eq:Mellin_Lf}
\frac{e^{\frac{w_0^2}{4\sigma^2}}}{w_0 \pi^{3/2}} \wh{\Lc\vf}_n(\sigma^{-1})
&= \int_0^{\infty} \left\{(f_2+\I f_1)_{n-1}(r)\, g_{n-1}\!\left(\frac{\sigma}{r}\right)\ +  (f_2-\I f_1)_{n+1}(r)\, g_{n+1}\!\left(\frac{\sigma}{r}\right)\,\right\} r\, dr
\end{align*}
which implies
\begin{align}
\Nc\left(\frac{e^{\frac{w_0^2}{4\sigma^2}}}{w_0 \pi^{3/2}}\wh{\Lc\vf}_n(\sigma^{-1})\right)(\rho) 
 &= \Nc\left(f_2+\I f_1)_{n-1}\right(\rho+2)\Nc g_{n-1}(\rho)+ \Nc (f_2-\I f_1)_{n+1}(\rho+2)\Nc g_{n+1}(\rho).
\end{align}
Taking the Fourier transform of $\Lc^1\vf$ with respect to $s$, we get
\begin{align*}
        \wh{\Lc^1\vf} (\sigma,\theta) &= \int_{\Rb^2} e^{-\I \sigma (\vtheta\cdot \vx)} (\vtheta^\perp \cdot \vx)\wh{K}(\sigma,\vtheta^\perp \cdot \vx)\, \vtheta^\perp \cdot \vf (\vx)\, d\vx.
    \end{align*}
Going from Cartesian coordinates to polar coordinates gives
\begin{align*}
    \wh{\Lc^1\vf} (\sigma,\theta) &=- \int_0^{2\pi}\int_0^{\infty}e^{-\I \sigma r\mathfrak{c}_{(\theta-\phi)}}r\mathfrak{s}_{(\theta-\phi)} \wh{K}(\sigma,-r\mathfrak{s}_{(\theta-\phi)})(-\mathfrak{s}_{\theta}f_1(r\vphi)+\mathfrak{c}_{\theta}f_2(r\vphi))\,r\,dr\,d\phi.
\end{align*}
Computing the $n^{th}$ Fourier coefficient of $ \wh{\Lc^1\vf}$, we have
  \begin{align*}
      -2\wh{\Lc^1\vf}_n(\sigma) &=\frac{1}{2\pi}\int_0^{2\pi} \int_0^{2\pi}\int_0^{\infty}e^{-\I (n-1)\theta}e^{-i\sigma r\mathfrak{c}_{(\theta-\phi)}} \mathfrak{s}_{(\theta-\phi)}\wh{K}(\sigma,-r\mathfrak{s}_{(\theta-\phi)})(f_2+\I f_1)(r\vphi)\,r^2\,dr\,d\phi\,d\theta \nonumber \\
      &  + \frac{1}{2\pi}\int_0^{2\pi} \int_0^{2\pi}\int_0^{\infty}e^{-\I (n+1)\theta}e^{-i\sigma r\mathfrak{c}_{(\theta-\phi)}} \mathfrak{s}_{(\theta-\phi)}\wh{K}(\sigma,-r\mathfrak{s}_{(\theta-\phi)})(f_2-\I f_1)(r\vphi)\,r^2\,dr\,d\phi\,d\theta \nonumber \\
      &=\frac{1}{2\pi}\int_0^{2\pi} \int_0^{2\pi}\int_0^{\infty}e^{-\I (n-1)(\phi+\psi)}e^{-i\sigma r\mathfrak{c}_\psi} \mathfrak{s}_\psi \wh{K}(\sigma,-r\mathfrak{s}_\psi)(f_2+\I f_1)(r\vphi)\,r^2\,dr\,d\phi\,d\psi \nonumber \\
      &  + \frac{1}{2\pi}\int_0^{2\pi} \int_0^{2\pi}\int_0^{\infty}e^{-\I (n+1)(\phi+\psi)}e^{-i\sigma r\mathfrak{c}_\psi} \mathfrak{s}_\psi\wh{K}(\sigma,-r\mathfrak{s}_\psi)(f_2-\I f_1)(r\vphi)\,r^2\,dr\,d\phi\,d\psi.
  \end{align*}
Substituting $\displaystyle \mathfrak{s}_{\psi}= \frac{e^{\I \psi}- e^{-\I \psi}}{2\I}$ in the above equation, we have
  \begin{align*}
      -4\I\wh{\Lc^1\vf}_n(\sigma) &= \int_0^{2\pi}\int_0^{\infty}\left( e^{-\I((n-2)\psi} -e^{-\I n \psi}\right)e^{-i\sigma r\mathfrak{c}_\psi}\wh{K}(\sigma,-r\mathfrak{s}_\psi)(f_2+\I f_1)_{n-1}(r)\,r^2\,dr\,d\psi\nonumber \\ &\quad + \int_0^{2\pi}\int_0^{\infty}\left( e^{-\I n\psi} -e^{-\I (n+2) \psi}\right)e^{-i\sigma r\mathfrak{c}_\psi}\wh{K}(\sigma,-r\mathfrak{s}_\psi)(f_2-\I f_1)_{n+1} (r)\,r^2\,dr\,d\psi 
      \end{align*}
 Using the expression for the Fourier transform of the Gaussian kernel, we have    
\begin{align*}
  \frac{-2\I e^{\frac{w_0^2}{4 \sigma^2}}}{w_0 \pi^{3/2}}\wh{\Lc^1\vf}_n(\sigma^{-1})  &= \int_0^{\infty}(f_2+\I f_1)_{n-1}(r)\left(g_{n-2}-g_{n}\right)(\frac{\sigma}{r})\,r^2\,dr \\  &\qquad + \int_0^{\infty}(f_2-\I f_1)_{n+1}(r)\left(g_{n}-g_{n+2}\right)(\frac{\sigma}{r})\,r^2\,dr.
  \end{align*}
Taking the Mellin transform of both sides of the above equation, we have
  \begin{align}\label{eq:Mellin_L1f}
   \Nc\left( \frac{-2\I e^{\frac{w_0^2}{4 \sigma^2}}}{w_0 \pi^{3/2}}\wh{\Lc^1\vf}_n(\sigma^{-1})\right)(\rho)  &= \Nc\left((f_2+\I f_1)_{n-1}\right)(\rho+3)\Nc \left(g_{n-2}-g_{n}\right)(\rho)\nonumber \\ &\quad + \Nc\left((f_2-\I f_1)_{n+1}\right)(\rho+3)\Nc \left(g_{n}-g_{n+2}\right)(\rho).
  \end{align}
  From equation \eqref{eq:Mellin_Lf}, we have 
  \begin{align}
     \Nc\left(f_2+\I f_1)_{n-1}\right(\rho)&= \frac{\Nc\left(\frac{e^{\frac{w_0^2}{4\sigma^2}}}{w_0 \pi^{3/2}}\wh{\Lc\vf}_n(\sigma^{-1})\right)(\rho-2)-\Nc (f_2-\I f_1)_{n+1}(\rho)\Nc g_{n+1}(\rho-2)}{\Nc g_{n-1}(\rho-2)} .
  \end{align}
Substituting this into equation \eqref{eq:Mellin_L1f}, we have
  \begin{align*}
&\Nc\left(  \frac{-2\I e^{\frac{w_0^2}{4 \sigma^2}}}{w_0 \pi^{3/2}} \wh{\Lc^1 \vf}_n(\sigma^{-1}) \right)(\rho-3) =  \Nc \left( (f_2 - \I f_1)_{n+1} \right)(\rho)\Nc \left( g_n - g_{n+2} \right)(\rho-3) \nonumber \\
& \quad \quad + \frac{\Nc\left( \frac{e^{\frac{w_0^2}{4\sigma^2}}}{w_0 \pi^{3/2}} \wh{\Lc \vf}_n(\sigma^{-1}) \right)(\rho-2)
    - \Nc (f_2 - \I f_1)_{n+1}(\rho) \Nc g_{n+1}(\rho-2)}{\Nc g_{n-1}(\rho-2)} \Nc\left( g_{n-2} - g_n \right)(\rho-3)
    \end{align*}
    which implies
\begin{align*}
&\Nc g_{n-1}(\rho-2) \Nc \bigg[\frac{-2\I e^{\frac{w_0^2}{4 \sigma^2}}}{w_0 \pi^{3/2}} \wh{\Lc^1 \vf}_n(\sigma^{-1}) \bigg](\rho-3)
- \Nc\left( g_{n-2} - g_n \right)(\rho-3)\Nc \bigg[ \frac{e^{\frac{w_0^2}{4\sigma^2}}}{w_0 \pi^{3/2}} \wh{\Lc \vf}_n(\sigma^{-1}) \bigg](\rho-2) \\
& \hspace{-0.25cm}=\Nc \left( (f_2 - \I f_1)_{n+1} \right)(\rho) \big[ - \Nc g_{n+1}(\rho-2) \Nc\left( g_{n-2} - g_n \right)(\rho-3) + \Nc g_{n-1}(\rho-2)  \Nc \left( g_n - g_{n+2} \right)(\rho-3) \big]
\end{align*}
From the above equation, we have
\begin{align}\label{eq:M(f_2-if_1)_n}
& \Nc \left( (f_2 - \I f_1)_{n} \right)(\rho)\nonumber \\
& \hspace{-0.3cm}= \frac{
    \Nc g_{n-2}(\rho-2) \Nc \bigg[\frac{-2\I e^{\frac{w_0^2}{4 \sigma^2}}}{w_0 \pi^{3/2}} \wh{\Lc^1 \vf}_{n-1}(\sigma^{-1})\bigg](\rho-3)
    - \Nc\left(g_{n-3} - g_{n-1} \right)(\rho-3)\Nc \bigg[\frac{e^{\frac{w_0^2}{4\sigma^2}}}{w_0 \pi^{3/2}} \wh{\Lc \vf}_{n-1}(\sigma^{-1}) \bigg](\rho-2)}{\Nc g_{n-2}(\rho-2) \Nc \left( g_{n-1} - g_{n+1} \right)(\rho-3)
    - \Nc g_{n}(\rho-2) \Nc\left( g_{n-3} - g_{n-1} \right)(\rho-3)
} 
\end{align}
Using the inversion of the Mellin transform \eqref{def:inverse_mellin}, we can recover $ (f_2 - \I f_1)_{n}.$
\vspace{2mm}\\
Then using the knowledge of $\Lc\vf$ and reconstructed $ (f_2 - \I f_1)_{n}$, from equation \eqref{eq:Mellin_Lf}, we have
\begin{align}\label{eq:M(f_2+if_1)_n}
 \Nc\left(f_2+\I f_1)_{n}\right(\rho)&= \frac{ \Nc\left(\frac{e^{\frac{w_0^2}{4\sigma^2}}}{w_0 \pi^{3/2}}\wh{\Lc\vf}_{n+1}(\sigma^{-1})\right)(\rho-2)-\Nc (f_2-\I f_1)_{n+2}(\rho)\Nc g_{n+2}(\rho-2)}{\Nc g_{n}(\rho-2)}.
\end{align}
Again applying Mellin's inversion \eqref{def:inverse_mellin}, we get $ (f_2 + \I f_1)_{n}.$ Therefore,  from \eqref{eq:M(f_2-if_1)_n} and \eqref{eq:M(f_2+if_1)_n}, we can recover $n^{th}$ Fourier coefficients of $f_1$ and $f_2$, hence $\vf$. This completes the proof.
\end{proof}
\subsection{Proof of Theorem \ref{2-tenor_rec:Lf+Tf+Mf}}
In this subsection, we present the reconstruction of a symmetric $2$-tensor field $\vf$ from the knowledge of longitudinal GbRt ($\Lc\vf$), transverse GbRt ($\Tc\vf$) and mixed GbRt ($\Mc\vf$) of $\vf.$
\begin{proof}
     By adding $\Lc\vf$ and $\Tc\vf$, we obtain
     \begin{align*}
         (\Lc\vf + \Tc\vf)(s,\theta)= \int_{\Rb^2} K(\vtheta\cdot \vx -s, \vtheta^\perp \cdot \vx)\,(f_{11}+f_{22})(\vx)\,d\vx.
     \end{align*}
  Taking the Fourier transform with respect to `$s$', we have 
  \begin{align*}
      (\wh{\Lc\vf}+\wh{\Tc\vf})(\sigma,\theta) = \int_{\Rb^2} e^{-\I \sigma (\vtheta\cdot \vx)}\wh{K}(\sigma,\vtheta^\perp \cdot \vx)\, (f_{11}+f_{22})(\vx)\,d\vx.
  \end{align*}
Changing the coordinates from Cartesian to polar by substituting $\vx=r\vphi$ and computing the $n^{th}$ Fourier coefficient, we get 
\begin{align*}
   (\wh{\Lc\vf}+\wh{\Tc\vf})_{n}(\sigma)&=\frac{1}{2\pi}\int_0^{2\pi}(\wh{\Lc\vf}+\wh{\Tc\vf})(\sigma,\theta)e^{-\I n\theta}\,d\theta\\ &=2w_0 \pi^{3/2}e^{-\frac{w_0^2\sigma^2}{4}}\int_0^{\infty}(f_{11}+ f_{22})_n(r)g_n(r^{-1}\sigma^{-1})\,r\,dr,  
\end{align*}
 where $g_n$ is the $n^{th}$ Fourier coefficient of the Fourier transform of  $ g(t,\psi)= \exp\left( -\frac{w_0^2t^{-2}\mathfrak{s}^2_{\psi}}{4z_{R}^2}-\I t^{-1}\mathfrak{c}_{\psi}\right)$.\\
Applying the Mellin transform to the above equation, we get 
\begin{align}\label{Rec:f_11+f_22}
    \Nc\{ (f_{11}+f_{22})_n\}(\rho)=\frac{\Nc\left[\frac{e^{\frac{w_0^2}{4\sigma^2}}}{2w_0 \pi^{3/2}}(\wh{\Lc\vf}+\wh{\Tc\vf})_{n}(\sigma^{-1})\right](\rho-2)}{\Nc g_n(\rho-2)}.
\end{align}
Next, taking Fourier transforms of  $\Lc\vf, \Tc\vf, \Mc\vf$ with respect to `$s$' and adding in the following way, we have
\begin{align}
    (\wh{\Lc\vf}-\wh{\Tc\vf}+2\I\wh{\Mc\vf})(\sigma,\theta)= \int_{\Rb^2} e^{-\I \sigma (\vtheta\cdot \vx)}\wh{K}(\sigma,\vtheta^\perp \cdot \vx)\, e^{2\I\theta}(f_{22}(\vx)- f_{11}(\vx) +2\I f_{12}(\vx))\, d\vx.
\end{align}
Changing the coordinates from Cartesian to polar by substituting $\vx=r\vphi$ and computing the $(n+2)^{th}$ Fourier coefficients of $\wh{\Lc\vf}-\wh{\Tc\vf}+2\I\wh{\Mc\vf}$, we have
\begin{align}
    (\wh{\Lc\vf}-\wh{\Tc\vf}+2\I\wh{\Mc\vf})_{n+2}(\sigma)
    &=\frac{1}{2\pi}\int_0^{2\pi}(\wh{\Lc\vf}-\wh{\Tc\vf}+2\I\wh{\Mc\vf})(\sigma,\theta)e^{-\I(n+2)\theta}\,d\theta \nonumber\\
    &=2w_0 \pi^{3/2}e^{-\frac{w_0^2\sigma^2}{4}}\int_0^{\infty}(f_{22}- f_{11} +2\I f_{12})_n(r)g_n(r^{-1}\sigma^{-1})\,r\,dr.  
\end{align}
Applying the Mellin transform to the above equation, we get 
\begin{align}\label{Rec:f_22-f_11+2if_12}
    \Nc\{ (f_{22}- f_{11} +2\I f_{12})_n\}(\rho)=\frac{\Nc\left[ \frac{e^{\frac{w_0^2}{4\sigma^2}}}{2w_0 \pi^{3/2}}(\wh{\Lc\vf}-\wh{\Tc\vf}+2\I\wh{\Mc\vf})_{n+2}(\sigma^{-1})\right](\rho-2)}{\Nc g_n(\rho-2)}.
\end{align}

\noindent Taking Fourier transforms of  $\Lc\vf, \Tc\vf, \Mc\vf$, with respect to `$s$' and adding in the following way, we have
\begin{align}
    (\wh{\Lc\vf}-\wh{\Tc\vf}-2\I\wh{\Mc\vf})(\sigma,\theta)= \int_{\Rb^2} e^{-\I \sigma (\vtheta\cdot \vx)}\wh{K}(\sigma,\vtheta^\perp \cdot \vx)\, e^{-2\I\theta}(f_{22}(\vx)- f_{11}(\vx) +2\I f_{12}(\vx))\, d\vx.
\end{align}
Changing the coordinates from Cartesian to polar by substituting $\vx=r\vphi$ and computing the $(n-2)^{th}$ fourier coefficients of $\wh{\Lc\vf}-\wh{\Tc\vf}-2\I\wh{\Mc\vf}$, we have
\begin{align}
    (\wh{\Lc\vf}-\wh{\Tc\vf}-2\I\wh{\Mc\vf})_{n-2}(\sigma)&=\frac{1}{2\pi}\int_0^{2\pi}(\wh{\Lc\vf}-\wh{\Tc\vf}+2\I\wh{\Mc\vf})(\sigma,\theta)e^{-\I(n+2)\theta}\,d\theta \nonumber\\
    &=2w_0 \pi^{3/2}e^{-\frac{w_0^2\sigma^2}{4}}\int_0^{\infty}(f_{22}- f_{11} -2\I f_{12})_n(r)g_n(r^{-1}\sigma^{-1})\,r\,dr.  
\end{align}
Applying the Mellin transform to the above equation, we get 
\begin{align}\label{Rec:f_22-f_11-2if_12}
    \Nc\{ (f_{22}- f_{11} -2\I f_{12})_n\}(\rho)=\frac{\Nc\left[ \frac{e^{\frac{w_0^2}{4\sigma^2}}}{2w_0 \pi^{3/2}}(\wh{\Lc\vf}-\wh{\Tc\vf}-2\I\wh{\Mc\vf})_{n-2}(\sigma^{-1})\right](\rho-2)}{\Nc g_n(\rho-2)}.
\end{align}

\noindent Taking the inverse Mellin transform \eqref{def:inverse_mellin} of the equations \eqref{Rec:f_11+f_22},\ \eqref{Rec:f_22-f_11+2if_12},\ \eqref{Rec:f_22-f_11-2if_12}, we can recover the $n^{th}$ Fourier components $(f_{11}+f_{22})_{n}$, $(f_{22}-f_{11}+2\I f
_{12})_n$ and $(f_{22}-f_{11}-2\I f
_{12})_n$ respectively. Combining these we can recover $n^{th}$ Fourier components
$(f_{11})_n$, $(f_{12})_n$, $(f_{22})_n$ of a symmetric 2-tensor field $\vf$. Hence, this completes the proof.
\end{proof}
\subsection{Proof of Theorem \ref{2-tensor_rec: Lf+:L^1f+L^2f}}\label{Sec: Rec_Lf,L^1f,L^2f}
The goal of this subsection is to prove reconstruction of a symmetric $2$-tensor field $\vf$, from the knowledge of either longitudinal GbRt, its first and second moments ($\Lc\vf,\Lc^1\vf,\Lc^2\vf$), or transverse GbRt, its first and second moments ($\Tc\vf,\Tc^1\vf,\Tc^2\vf$). To avoid repetition, we only present recovery from longitudinal data and its moments here. Using a similar approach, one can recover $\vf$ from transverse GbRt and its moments.
\begin{proof}
Taking the Fourier transform of $\Lc\vf$ with respect to `$s$' and changing the coordinates from Cartesian to polar, we have
\begin{align*}
    \wh{\Lc\vf}(\sigma, \theta)&= \int_0^{2\pi}\int_0^{\infty} e^{-\I\sigma r \mathfrak{c}_{(\theta-\phi)}}\wh{K}(\sigma,-r\mathfrak{s}_{(\theta-\phi)})\left(\mathfrak{s}^2_{\theta} f_{11}-2\mathfrak{s}_{\theta}\mathfrak{c}_{\theta} f_{12}+\mathfrak{c}^2_{\theta} f_{22}\right)(r\vphi)\, r\,dr\,d\phi\\
    &= \int_0^{2\pi}\int_0^{\infty} e^{-\I\sigma r \mathfrak{c}_{(\theta-\phi)}}\wh{K}(\sigma,-r\mathfrak{s}_{(\theta-\phi)})\bigg[\frac{1}{2}(f_{22}+f_{11})+\frac{1}{4}(f_{22}-f_{11}+2\I f_{12})e^{2\I \theta} \\
    & \quad \quad \qquad +\frac{1}{4}(f_{22}-f_{11}-2\I f_{12})e^{-2\I \theta}\bigg](r\vphi)\, r\,dr\,d\phi
\end{align*}
Computing the $n^{th}$ Fourier coefficient, we get
\begin{align}
    \wh{\Lc\vf}_n(\sigma) &= \frac{1}{2\pi}\int_0^{2\pi}\int_0^{2\pi}\int_0^{\infty} e^{-\I n \theta}e^{-\I\sigma r \mathfrak{c}_{(\theta-\phi)}}\wh{K}(\sigma,-r\mathfrak{s}_{(\theta-\phi)})\bigg[\frac{1}{4}(f_{22}-f_{11}+2\I f_{12})e^{2\I \theta}\nonumber \\
    & \quad \qquad \quad +\frac{1}{2}(f_{22}+f_{11}) +\frac{1}{4}(f_{22}-f_{11}-2\I f_{12})e^{-2\I \theta}\bigg](r\vphi)\, r\,dr\,d\phi\, d\theta
\end{align}
Substituting $\theta-\phi=\psi$, we get
\begin{align*}
    \wh{\Lc\vf}_n(\sigma)&= \frac{1}{2\pi}\Bigg[\frac{1}{2}\int_0^{2\pi}\int_0^\infty\int_0^{2\pi}e^{-\I n\phi}(f_{11}+f_{22})(r\vphi) e^{-\I n\psi} e^{-\I \sigma r\mathfrak{c}_{\psi}} \wh{K}(\sigma,-r\mathfrak{s}_{\psi})\, r \, d\phi \, dr\, d\psi \nonumber \\
    &+ \frac{1}{4}\int_0^{2\pi}\int_0^\infty\int_0^{2\pi}e^{-\I (n-2)\phi}(f_{22}-f_{11}+2\I f_{12})(r\vphi)\, e^{-\I (n-2)\psi} e^{-\I \sigma r\mathfrak{c}_{\psi}} \wh{K}(\sigma,-r\mathfrak{s}_{\psi})\, r \, d\phi \, dr\, d\psi \nonumber \\
    &+ \frac{1}{4}\int_0^{2\pi}\int_0^\infty\int_0^{2\pi}e^{-\I (n+2)\phi}(f_{22}-f_{11}-2\I f_{12})(r\vphi)\, e^{-\I (n+2)\psi} e^{-\I \sigma r\mathfrak{c}_{\psi}} \wh{K}(\sigma,-r\mathfrak{s}_{\psi})\, r \, d\phi \, dr\, d\psi \Bigg]
    \end{align*}
Using the expression for the Fourier transform of the Gaussian kernel, we have    
\begin{align}
 \wh{\Lc\vf}_n(\sigma)&= 2w_0 \pi^{3/2}e^{-\frac{w_0^2 \sigma^2}{4}}\int_0^{\infty}\bigg[\frac{1}{4}(f_{22}-f_{11}+2\I f_{12})_{n-2}(r)g_{n-2}(r^{-1}\sigma^{-1})\nonumber\\ & \quad \quad+\frac{1}{2}(f_{22}+f_{11})_n(r)g_n(r^{-1}\sigma^{-1})+  \frac{1}{4}(f_{22}-f_{11}-2\I f_{12})_{n+2}(r)g_{n+2}(r^{-1}\sigma^{-1})\bigg]\,r\,dr
\end{align}
Applying the Mellin transform of the above equation, we have
\begin{align}\label{Mellin_Lf}
    \Nc\bigg[\frac{e^{\frac{w_0^2}{4\sigma^2}}}{2w_0 \pi^{3/2}}\wh{\Lc\vf}_n(\sigma^{-1})\bigg](\rho-2)&= \frac{1}{2}\Nc(f_{22}+f_{11})_{n}(\rho)\Nc g_n(\rho-2) \nonumber \\ &+\frac{1}{4}\Nc(f_{22}-f_{11}+2\I f_{12})_{n-2}(\rho)\Nc g_{n-2}(\rho-2)\nonumber \\
    &+\frac{1}{4}\Nc(f_{22}-f_{11}-2\I f_{12})_{n+2}(\rho)\Nc g_{n+2}(\rho-2).
\end{align}
 Taking the Fourier transform of $\Lc^1\vf(s,\theta)$, with respect to `$s$', we get
\begin{align*}
    \wh{\Lc^1\vf}(\sigma, \theta)= \int_{\Rb^2}(\vtheta^\perp \cdot \vx) e^{-\I\sigma(\vtheta\cdot \vx)} \wh{K}(\sigma, \vtheta^\perp \cdot \vx) (\vtheta^\perp)^2\cdot\vf(\vx)\,d\vx.
\end{align*}
Changing the coordinates from Cartesian to polar, we have
\begin{align*}
    \wh{\Lc^1\vf}(\sigma, \theta) &= - \int_0^{2\pi}\int_0^\infty r\mathfrak{s}_{(\theta-\phi)} e^{-\I\sigma r \mathfrak{c}_{(\theta-\phi)}} \wh{K}(\sigma, -r\mathfrak{s}_{(\theta-\phi)})(\vtheta^\perp)^2\cdot\vf(r\vphi)\,r\,dr\, d\phi\\
    &= - \int_0^{2\pi}\int_0^\infty r\mathfrak{s}_{(\theta-\phi)} e^{-\I\sigma r \mathfrak{c}_{(\theta-\phi)}} \wh{K}(\sigma, -r\mathfrak{s}_{(\theta-\phi)})\bigg[\frac{1}{2} (f_{11}+f_{22})(r\vphi)\\& \quad +\frac{1}{4} e^{2\I \theta}\left(f_{22}-f_{11}+2\I f_{12}\right)(r\vphi) +\frac{1}{4} e^{-2\I \theta}\left(f_{22}-f_{11}-2\I f_{12}\right)(r\vphi) \bigg]\, r\,dr\, d\phi.
\end{align*}

Next, computing the $n^{th}$ Fourier coefficient of $\wh{\Lc^1\vf}$, we get
\begin{align*}
-\wh{\Lc^1\vf}_{n}(\sigma)&=\frac{1}{2}\int_0^{2\pi}\int_0^\infty e^{-\I n\psi}\mathfrak{s}_{\psi} e^{-\I \sigma r \mathfrak{c}_{\psi}} \wh{K}(\sigma, -r\mathfrak{s}_{\psi}) (f_{11}+f_{22})_n(r)r^2\,dr\,d\psi\nonumber\\&+ \frac{1}{4}\int_0^{2\pi}\int_0^\infty e^{-\I (n-2)\psi}\mathfrak{s}_{\psi} e^{-\I \sigma r \mathfrak{c}_{\psi}} \wh{K}(\sigma, -rS_{\psi})(f_{22}-f_{11}+2\I f_{12})_{n-2}(r)r^2\,dr\,d\psi\nonumber \\&+ \frac{1}{4}\int_0^{2\pi}\int_0^\infty e^{-\I (n+2)\psi}\mathfrak{s}_{\psi} e^{-\I \sigma r \mathfrak{c}_{\psi}} \wh{K}(\sigma, -r\mathfrak{s}_{\psi})(f_{22}-f_{11}-2\I f_{12})_{n+2}(r)r^2\,dr\,d\psi.
\end{align*}
Substituting $\displaystyle \mathfrak{s}_{\psi}= \frac{e^{\I \psi}- e^{-\I \psi}}{2\I}$ in the above equation and using the expression for the Fourier transform of the Gaussian kernel, we have
\begin{align*}
 &-2\I\wh{\Lc^1\vf}_n(\sigma)= 2w_0 \pi^{3/2}e^{-\frac{w_0^2 \sigma^2}{4}}\int_0^{\infty}\bigg[\frac{1}{4}(f_{22}-f_{11}+2\I f_{12})_{n-2}(r)(g_{n-3}- g_{n-1})(r^{-1}\sigma^{-1})\nonumber\\& +\frac{1}{2}(f_{22}+f_{11})_n(r)(g_{n-1}- g_{n+1})(r^{-1}\sigma^{-1})+  \frac{1}{4}(f_{22}-f_{11}-2\I f_{12})_{n+2}(r)(g_{n+1}-g_{n+3})(r^{-1}\sigma^{-1})\bigg]\,r^2\,dr.
\end{align*}
Taking the Mellin transform of the above equation, we have
\begin{align}\label{Mellin_L^1f}
     \Nc\bigg[-\I \frac{e^{\frac{w_0^2}{4\sigma^2}}}{w_0\pi^{3/2}}\wh{\Lc^1\vf}_n(\sigma^{-1})\bigg](\rho-3)&= \frac{1}{4}\Nc(f_{22}-f_{11}+2\I f_{12})_{n-2}(\rho)\Nc(g_{n-3}- g_{n-1})(\rho-3)\nonumber\\
     &\quad  + \frac{1}{2}\Nc(f_{22}+f_{11})_n(\rho)\Nc(g_{n-1}- g_{n+1})(\rho-3)\nonumber\\
     &\quad+\frac{1}{4}\Nc(f_{22}-f_{11}-2\I f_{12})_{n+2}(\rho)\Nc(g_{n+1}-g_{n+3})(\rho-3).
\end{align}
Taking the Fourier transform of $\Lc^2\vf(s,\theta)$ with respect to `$s$', we get
\begin{align*}
    \wh{\Lc^2\vf}(\sigma, \theta)= \int_{\Rb^2}(\vtheta^\perp \cdot \vx)^2 e^{-\I\sigma(\vtheta\cdot \vx)} \wh{K}(\sigma, \vtheta^\perp \cdot \vx) (\vtheta^\perp)^2\cdot\vf(\vx)\,d\vx.
\end{align*}
Changing the coordinates from Cartesian to polar, we have
\begin{align*}
    \wh{\Lc^2\vf}(\sigma, \theta)&=  \int_0^{2\pi}\int_0^\infty r^2\mathfrak{s}^2_{(\theta-\phi)} e^{-\I\sigma r \mathfrak{c}_{(\theta-\phi)}} \wh{K}(\sigma, -r\mathfrak{s}_{(\theta-\phi)})\bigg[\frac{1}{2} (f_{11}+f_{22})(r\vphi)\\& \qquad +\frac{1}{4} e^{2\I \theta}\left(f_{22}-f_{11}+2\I f_{12}\right)(r\vphi) +\frac{1}{4}e^{-2\I \theta}\left(f_{22}-f_{11}-2\I f_{12}\right)(r\vphi) \bigg]\, r\,dr\, d\phi.
\end{align*}
Computing the $n^{th}$ Fourier coefficient of $\wh{\Lc^2\vf}$, we get
\begin{align*}
    \wh{\Lc^2\vf}_n(\sigma)&= \frac{1}{2}\int_0^{2\pi}\int_0^{\infty} e^{-\I n\psi}\mathfrak{s}^2_{\psi}e^{-\I \sigma r \mathfrak{c}_{\psi}}\wh{K}(\sigma, -r\mathfrak{c}_{\psi}) (f_{11}+f_{22})_n(r)\,r^3\,dr\, d\psi \nonumber \\&+ \frac{1}{4}\int_0^{2\pi}\int_0^{\infty} e^{-\I (n-2)\psi}\mathfrak{s}^2_{\psi}e^{-\I \sigma r \mathfrak{c}_{\psi}}\wh{K}(\sigma, -r\mathfrak{s}_{\psi}) (f_{22}-f_{11}+2\I f_{12})_{n-2}(r)\,r^3\,dr\, d\psi \nonumber \\&+ \frac{1}{4}\int_0^{2\pi}\int_0^{\infty} e^{-\I (n+2)\psi}\mathfrak{s}^2_{\psi}e^{-\I \sigma r \mathfrak{c}_{\psi}}\wh{K}(\sigma, -r\mathfrak{s}_{\psi}) (f_{22}-f_{11}-2\I f_{12})_{n+2}(r)\,r^3\,dr\, d\psi. 
    \end{align*}
Substituting $\displaystyle \mathfrak{s}^2_{\psi}=\frac{2-e^{2\I \psi}-e^{-2\I \psi}}{4}$ and using the expression for the Fourier transform of the Gaussian kernel, we have
    \begin{align}
  4\wh{\Lc^2\vf}_n(\sigma)&=2w_0 \pi^{3/2} e^{-\frac{w_0^2 \sigma^2}{4}}\int_0^{\infty}\bigg[\frac{1}{2}(f_{22}+f_{11})_n(r)(2g_n-g_{n-2}- g_{n+2})(r^{-1}\sigma^{-1})\nonumber \\&\quad +\frac{1}{4}(f_{22}-f_{11}+2\I f_{12})_{n-2}(r)(2g_{n-2}-g_{n-4}- g_{n})(r^{-1}\sigma^{-1})\nonumber \\ &\qquad    + \frac{1}{4}(f_{22}-f_{11}-2\I f_{12})_{n+2}(r)(2g_{n+2}-g_{n}-g_{n+4})(r^{-1}\sigma^{-1})\bigg]\,r^3\,dr.
\end{align}
Taking the Mellin transform on both sides of the above equation, we have
\begin{align}\label{Mellin_L^2f}
    \Nc\bigg[\frac{2e^{\frac{w_0^2}{4\sigma^2}}}{w_0 \pi^{3/2}}\wh{\Lc^2\vf}_n(\sigma^{-1})\bigg](\rho-4)&=\frac{1}{2}\Nc(f_{22}+f_{11})_n(\rho)\Nc(2g_n-g_{n-2}- g_{n+2})(\rho-4)\nonumber \\& \quad +\frac{1}{4}\Nc(f_{22}-f_{11}+2\I f_{12})_{n-2}(\rho)\Nc(2g_{n-2}-g_{n-4}- g_{n})(\rho-4)\nonumber \\ &\quad +  \frac{1}{4}\Nc(f_{22}-f_{11}-2\I f_{12})_{n+2}(\rho) \Nc(2g_{n+2}-g_{n}-g_{n+4})(\rho-4).
\end{align}
From equation \eqref{Mellin_Lf}, we have 
\begin{align}\label{Lf:N(f_11+f_22)_n}
    \Nc(f_{22}+f_{11})_{n}(\rho)&= \frac{2}{\Nc g_n(\rho-2)}\bigg[\Nc\left(\frac{e^{\frac{w_0^2}{4\sigma^2}}}{2w_0 \pi^{3/2}}\wh{\Lc\vf}_n(\sigma^{-1})\right)(\rho-2)\nonumber\\ &-\frac{1}{4}\Nc(f_{22}-f_{11}+2\I f_{12})_{n-2}(\rho)\Nc g_{n-2}(\rho-2)\nonumber\\ &-\frac{1}{4}\Nc(f_{22}-f_{11}-2\I f_{12})_{n+2}(\rho)\Nc g_{n+2}(\rho-2)\bigg].
\end{align}
From equation \eqref{Mellin_L^1f} and using the above expression, we have
\begin{align}\label{Lf+L^1f}
    &4\Nc\bigg[-\I \frac{e^{\frac{w_0^2}{4\sigma^2}}}{w_0\pi^{3/2}}\wh{\Lc^1\vf}_n(\sigma^{-1})\bigg](\rho-3)\Nc g_n(\rho -2)\nonumber \\
     & \qquad - 4\Nc\bigg[\frac{e^{\frac{w_0^2}{4\sigma^2}}}{2w_0 \pi^{3/2}}\wh{\Lc\vf}_n(\sigma^{-1})\bigg](\rho-2) \Nc(g_{n-1}-g_{n+1})(\rho-3)\nonumber \\&= \Nc(f_{22}-f_{11}+2\I f_{12})_{n-2}(\rho)\nonumber \\
    &\qquad \qquad  \times\left[\Nc g_n(\rho-2)\Nc(g_{n-3}-g_{n-1})(\rho-3)- \Nc g_{n-2}(\rho-2) \Nc(g_{n-1}-g_{n+1})(\rho-3)\right] \nonumber \\ &\quad + \Nc(f_{22}-f_{11}-2\I f_{12})_{n
    +2}(\rho)\nonumber \\
    &\qquad \qquad  \times\left[\Nc g_n(\rho-2)\Nc(g_{n+1}-g_{n+3})(\rho-3)- \Nc g_{n+2}(\rho-2) \Nc(g_{n-1}-g_{n+1})(\rho-3)\right].
\end{align}
From \eqref{Mellin_L^2f} and using \eqref{Lf:N(f_11+f_22)_n}, we get
\begin{align}\label{Lf+L^2f}
    &4\Nc\bigg[\frac{2e^{\frac{w_0^2}{4\sigma^2}}}{w_0 \pi^{3/2}}\wh{\Lc^2\vf}_n(\sigma^{-1})\bigg](\rho-4) \Nc g_n(\rho-2) \nonumber \\
    &\qquad -4\Nc\bigg[\frac{e^{\frac{w_0^2}{4\sigma^2}}}{2w_0 \pi^{3/2}}\wh{\Lc\vf}_n(\sigma^{-1})\bigg](\rho-2) \Nc(2g_n-g_{n-2}-g_{n+2})(\rho-4) \nonumber \\&\qquad \quad= \Nc(f_{22}-f_{11}+2\I f_{12})_{n-2}(\rho)\big[ \Nc g_n(\rho-2)\Nc(2g_{n-2}-g_{n-4}-g_n)(\rho-4)\nonumber \\&\qquad \qquad  + \Nc(f_{22}-f_{11}-2\I f_{12})_{n+2}(\rho)\big[\Nc g_n(\rho-2)\Nc(2g_{n+2}-g_n-g_{n+4})(\rho-4) \nonumber\\&\qquad \quad  \qquad - \Nc g_{n-2}(\rho-2)\Nc(2g_n-g_{n-2}-g_{n+2})(\rho-4)\big] \nonumber\\&\qquad \qquad \qquad - \Nc g_{n+2}(\rho-2)\Nc(2g_n-g_{n-2}-g_{n+2})(\rho-4)\big].
\end{align}
Solving the equations \eqref{Lf+L^1f}, \eqref{Lf+L^2f}, we get 
$\Nc(f_{22}-f_{11}+2\I f_{12})_{n-2}(\rho)$ and $ \Nc(f_{22}-f_{11}-2\I f_{12})_{n+2}(\rho)$. Then using the inverse Mellin transform \eqref{def:inverse_mellin}, we have $(f_{22}-f_{11}+2\I f_{12})_{n-2}$ and $(f_{22}-f_{11}-2\I f_{12})_{n+2}$. Thus, from these we can recover $(f_{22}-f_{11})_n$ and $(f_{12})_n$. Next substituting the values of reconstructed $\Nc(f_{22}-f_{11}+2\I f_{12})_{n-2}(\rho)$ and $ \Nc(f_{22}-f_{11}-2\I f_{12})_{n+2}(\rho)$ in \eqref{Lf:N(f_11+f_22)_n}, we can recover $\Nc(f_{11}+f_{22})_n$. Then using the inverse Mellin transform \eqref{def:inverse_mellin}, we can have $(f_{11}+f_{22})_n.$ Hence, we can recover the $n^{th}$ Fourier components of  $f_{11}$ and $f_{22}.$ Therefore, we can recover a symmetric 2-tensor field $\vf$ from  the knowledge of $\Lc\vf, \Lc^1\vf,\Lc^2\vf$. This completes the proof.
\end{proof}
\subsection{Proof of Remark \ref{Rmk:2-tensor_mixed_mom_partial}}
The goal of this subsection is to show that from the knowledge of mixed GbRt of a symmetric $2$-tensor field $\vf$, its first and second moments ($\Mc\vf, \Mc^1\vf, \Mc^2\vf $), we cannot fully recover $\vf$. But we can expect a partial recovery using the current technique. Before proving this, let us first state an operator $A$ introduced by the authors in \cite{mixed_Hoop_saksala_zhai}:
$$A\vf_{i_1j_1}= (-1)^{1-N(j_{1})}\vf_{i_1\delta(j_1)}$$
where $N(j_1)$ represents the number of $1$ present in $j_1$ and $\delta(j_1)$ changes $1$ to $2$ and $2$ to $1.$
Using this operator, we have
\begin{align*}
    Af_{11}=f_{12},\quad Af_{12}= -f_{11}, \quad Af_{21}=f_{22}, \quad Af_{22}=-f_{21}=-f_{12}.
\end{align*}
Let us denote 
\begin{align}
    \tilde{\vf}= \mbox{Sym}(A\vf)=\begin{bmatrix}
        f_{12} & \frac{f_{22}-f_{11}}{2}\\
        \frac{f_{22}-f_{11}}{2} & -f_{12}
    \end{bmatrix}.
\end{align}
Note that $\Mc\vf=-\Lc\tilde{\vf}$. Also, $\Mc^1\vf=- \Lc^1\tilde{\vf}$ and $\Mc^2\vf=- \Lc^2\tilde{\vf}.$ 
Using the similar procedure as in the subsection \ref{Sec: Rec_Lf,L^1f,L^2f}, we can only expect to recover the $n^{th}$ Fourier components $(f_{12})_n$ and $(f_{22}-f_{11})_n$. So, from the combination of $\Mc\vf, \Mc^1\vf$ and $\Mc^2\vf$ we can recover $\vf$ partially. For full reconstruction, we need to consider more data, for example $\Lc\vf$ or $\Tc\vf$, etc.
\section{Acknowledgements}\label{Acknowledgements}
SR was supported by the US National Science Foundation Grant Number 2309491.
\bibliography{refs}
\bibliographystyle{plain}

\end{document}